\theoremstyle{plain}
\newtheorem{theorem}{Theorem}
\newtheorem{proposition}[theorem]{Proposition}
\theoremstyle{definition}
\newtheorem{definition}[theorem]{Definition}
\newtheorem{corollary}[theorem]{Corollary}
\theoremstyle{remark}
\newtheorem{remark}[theorem]{Remark}
\newtheorem{example}[theorem]{Example}
\numberwithin{equation}{section}
\numberwithin{theorem}{section}
\renewcommand{\mathfrak}[1]{{\textbf{\upshape #1}}}
\renewcommand{\mathbf}{\bm}
\renewcommand{\mathrm}[1]{\scalebox{1.15}{\textsf{\upshape #1}}}
\renewcommand{\emph}[1]{\textrm{{\upshape #1}}}
\renewcommand{\mathit}[1]{\mathscr #1}
\renewcommand{\mathtt}[1]{\scalebox{1}{\bfseries \texttt{\upshape #1}}}
\numberwithin{equation}{section}
\numberwithin{theorem}{section}
\def\oldbibitem{} \let\oldbibitem=\bibitem
\def\bibitem{\stepcounter{citnum}\oldbibitem}
\def\oldxymatrix{} \let\oldxymatrix=\xymatrix
\def\xymatrix{\stepcounter{citdia}\oldxymatrix}
\renewcommand*{\backref}[1]{}
\renewcommand*{\backrefalt}[4]{[{\tiny%
    \ifcase #1 Not cited%
          \or Cited on page~\textcolor{BrickRed}{#2}%
          \else Cited on pages \textcolor{BrickRed}{#2}%
    \fi%
    }]}
\author{\small\scshape S\lowercase{teven} D\lowercase{uplij}}
\address{
Center for Information Technology (WWU IT),
Universit\"at M\"unster,
R\"ontgenstrasse 7-13\\
D-48149 M\"unster,
Deutschland}
\email{\small \sf douplii@uni-muenster.de;
sduplij@gmail.com;
https://ivv5hpp.uni-muenster.de/u/douplii}
\title{\large\bfseries\scshape M\lowercase{embership deformation of commutativity\\
 and obscure $n$-ary algebras}}
\date{\textit{of start} February 22, 2020. \textit{Date}:
\textit{of completion}
June 14, 2020.
\mbox{}\hskip 1.16em
\textit{Total}:
\total{citnum}
references
}
\renewcommand{\refname}{\textsc{References}}
\let\origsection\section
\renewcommand{\section}[1]{\sectionmark{#1}\origsection{#1}}
\let\origsubsection\subsection
\renewcommand{\subsection}[1]{\subsectionmark{#1}\origsubsection{#1}}
\renewenvironment{thebibliography}[1]{%
  \@xp\origsection\@xp*\@xp{\refname}%
  \normalfont\footnotesize\labelsep .9em\relax
  \renewcommand\theenumiv{\arabic{enumiv}}\let\p@enumiv\@empty
  \vspace*{-5pt}
  \list{\@biblabel{\theenumiv}}{\settowidth\labelwidth{\@biblabel{#1}}%
    \leftmargin\labelwidth \advance\leftmargin\labelsep
    \usecounter{enumiv}}%
  \sloppy \clubpenalty\@M \widowpenalty\clubpenalty
  \sfcode`\.=\@m
}{%
  \def\@noitemerr{\@latex@warning{Empty `thebibliography' environment}}%
  \endlist
}
\subjclass[2010]{03E72, 08A72, 13A02, 16U80, 16W50, 17A42, 17A70, 17B05, 17B70, 17B75, 20C25, 20C35, 20F29, 20N15, 20N25, 94D05}
\keywords{associative algebra, almost commutative algebra, color algebra, obscure algebra, degree of truth, membership deformation, fuzzy set, vagueness, graded algebra, membership function, noncommutative algebra, n-ary algebra,  Lie algebra, projective representation, n-ary representation}
\begin{document}
\mbox{}
\maketitle
\mbox{}

\begin{abstract}

\noindent A general mechanism for \textquotedblleft breaking\textquotedblright%
\ commutativity in algebras is proposed: if the underlying set is taken to be
not a crisp set, but rather an obscure/fuzzy set, the membership function,
reflecting the degree of truth that an element belongs to the set, can be
incorporated into the commutation relations. The special \textquotedblleft
deformations\textquotedblright\ of commutativity and $\varepsilon
$-commutativity are introduced in such a way that equal degrees of truth
result in the \textquotedblleft nondeformed\textquotedblright\ case. We also
sketch how to \textquotedblleft deform\textquotedblright\ $\varepsilon$-Lie
algebras and Weyl algebras. Further, the above constructions are extended to
$n$-ary algebras for which the projective representations and $\varepsilon
$-commutativity are studied.

\end{abstract}

\thispagestyle{empty}

\mbox{}
\vspace{-0.5cm}

\begin{small}
\tableofcontents
\end{small}%

\newpage

\pagestyle{fancy}

\addtolength{\footskip}{15pt}

\renewcommand{\sectionmark}[1]{%
\markboth{
{ \scshape #1}}{}}

\renewcommand{\subsectionmark}[1]{%
\markright{
\mbox{\;}\\[5pt]
\textmd{#1}}{}}

\fancyhead{}
\fancyhead[EL,OR]{\leftmark}
\fancyhead[ER,OL]{\rightmark}
\fancyfoot[C]{\scshape -- \textcolor{BrickRed}{\thepage} --}
\renewcommand\headrulewidth{0.5pt}

\fancypagestyle{fancyref}{ %
\fancyhf{} 
\fancyhead[C]{\scshape R\lowercase{eferences} }
\fancyfoot[C]{\scshape -- \textcolor{BrickRed}{\thepage} --}
\renewcommand {\headrulewidth }{0.5pt}
\renewcommand {\footrulewidth }{0pt}
}

\fancypagestyle{emptyf}{
\fancyhead{}
\fancyfoot[C]{\scshape -- \textcolor{BrickRed}{\thepage} --}
\renewcommand{\headrulewidth}{0pt}
}

\mbox{}
\vskip 1.5cm

\thispagestyle{emptyf}

\section{\textsc{Introduction}}

Noncommutativity is the main mathematical idea of modern physics which, on the
one hand, is grounded in quantum mechanics, in which generators of the algebra
of observables do not commute, and, on the other hand, in supersymmetry and
its variations based on the graded commutativity concept. Informally,
\textquotedblleft deformation\textquotedblright\ of commutativity in algebras
is mostly a special way to place a \textquotedblleft scalar\textquotedblright%
\ multiplier from the algebra field before the permuted product of two
arbitrary elements. The general approach is based on projective representation
theory and realized using almost commutative ($\varepsilon$-commutative)
graded algebras, where the role of the multipliers is played by bicharacters
of the grading group (as suitable \textquotedblleft scalar\textquotedblright%
\ objects taking values in the algebra field $\Bbbk$).

Here we propose, principally, a new another mechanism for \textquotedblleft
deformation\textquotedblright\ of commutativity which comes from incorporating
the ideas of vagueness in logic \cite{smith} to algebra. First, take the
underlying set of the algebra not as a crisp set, but as an obscure/fuzzy set
\cite{dub/pra}. Second, consider an algebra (over $\Bbbk=\mathbb{C}$), such
that each element can be endowed with a membership function (representing the
degree of truth), a scalar function that takes values in the unit interval and
describes the containment\ of a given element in the underlying set
\cite{belohl}. Third, introduce a special \textquotedblleft membership
deformation\textquotedblright\ of the commutation relations, so to speak the
difference of the degree of truth, which determines a \textquotedblleft
measure of noncommutativity\textquotedblright, whereby the elements having
equal membership functions commute. Such procedure can be also interpreted as
the \textquotedblleft continuous noncommutativity\textquotedblright, because
the membership function is usually continuous. Likewise we \textquotedblleft
deform\textquotedblright\ $\varepsilon$-commutativity relations and
$\varepsilon$-Lie algebras \cite{sch79,rit/wyl}. Then we universalize and
apply the above constructions to $n$-ary algebras \cite{mic/vin} for which we
also study projective representations generalizing the binary ones
\cite{zmu72}.

\section{\textsc{Preliminaries}}

First recall the main features of the standard gradation concept and of
generalized (almost) commutativity (or $\varepsilon$-commutativity)
\cite{sch79,rit/wyl}.

Let $\Bbbk$ be a unital field (with unit $1\in\Bbbk$ and zero $0\in\Bbbk$) and
$\mathcal{A}=\left\langle A\mid\cdot,+\right\rangle $ be an associative
algebra over $\Bbbk$ having a zero $z\in A$ and unit $e\in A$ (for unital
algebras). A \textit{graded algebra} $\mathcal{A}_{\mathcal{G}}$ ($G$-graded
$\Bbbk$-algebra) is a direct sum of subalgebras $\mathcal{A}_{\mathcal{G}%
}=\bigoplus_{g\in G}\mathcal{A}_{g}$, where $\mathcal{G}=\left\langle
G\mid+^{\prime}\right\rangle $ is a \textit{grading group} (an abelian
(finite) group with \textquotedblleft unit\textquotedblright\ $\mathtt{0}\in
G$) and the set multiplication is (\textquotedblleft respecting
gradation\textquotedblright)
\begin{equation}
A_{g}\cdot A_{h}\subseteq A_{g+^{\prime}h},\ \ \ \ g,h\in G. \label{aaa}%
\end{equation}

Elements of subsets $a=a_{\left(  g\right)  }\in A_{g}$ (with
\textquotedblleft full membership\textquotedblright) are $G$%
-\textit{homogeneous} \textit{of degree} $g$
\begin{equation}
g\equiv\deg\left(  a_{\left(  g\right)  }\right)  =\deg\left(  a\right)
=a_{\left(  g\right)  }^{\prime}\equiv a^{\prime}\in G,\ \ \ a=a_{\left(
g\right)  }\in A_{g}. \label{i}%
\end{equation}

The graded algebra $\mathcal{A}_{\mathcal{G}}$ is called a \textit{cross
product}, if in each subalgebra $\mathcal{A}_{g}$ there exists at least one
invertible element. If all nonzero homogeneous elements are invertible, then
$\mathcal{A}_{\mathcal{G}}$ is called a \textit{graded division algebra}
\cite{dade80}. The morphisms $\varphi:\mathcal{A}_{\mathcal{G}}\rightarrow
\mathcal{B}_{\mathcal{G}}$ acting on homogeneous elements (from $A_{i}$)
should be compatible with the grading $\varphi\left(  A_{g}\right)  \subset
B_{g},\forall g\in G$, while $\ker\varphi$ is an homogeneous ideal. The
category of binary $G$-graded algebras $G$-$\mathtt{Alg}$ consists of the
corresponding class of algebras and the homogeneous morphisms (see, e.g.,
\cite{bourbaki98,dade80}).

In binary graded algebras there exists a way to generalize noncommutativity
such that it can be dependent on the gradings (\textquotedblleft
coloring\textquotedblright). Indeed, some (two-place) function on grading
degrees (bicharacter), a (\textit{binary})\textit{ commutation factor}
$\varepsilon^{\left(  2\right)  }:G\times G\rightarrow\Bbbk^{\times}$ (where
$\Bbbk^{\times}=\Bbbk\setminus0$) can be introduced \cite{sch79,rit/wyl} :
\begin{equation}
a\cdot b=\varepsilon^{\left(  2\right)  }\left(  a^{\prime},b^{\prime}\right)
b\cdot a,\ \ \ \forall a,b\in A,\ \ \forall a^{\prime},b^{\prime}\in
G.\label{ab}%
\end{equation}

The properties of the commutation factor $\varepsilon^{\left(  2\right)  }$
under double permutation and associativity%
\begin{align}
\varepsilon^{\left(  2\right)  }\left(  a^{\prime},b^{\prime}\right)
\varepsilon^{\left(  2\right)  }\left(  b^{\prime},a^{\prime}\right)   &
=1,\label{e1}\\
\varepsilon^{\left(  2\right)  }\left(  a^{\prime},b^{\prime}+c^{\prime
}\right)   &  =\varepsilon^{\left(  2\right)  }\left(  a^{\prime},b^{\prime
}\right)  \varepsilon^{\left(  2\right)  }\left(  a^{\prime},c^{\prime
}\right)  ,\label{e2}\\
\varepsilon^{\left(  2\right)  }\left(  a^{\prime}+b^{\prime},c^{\prime
}\right)   &  =\varepsilon^{\left(  2\right)  }\left(  a^{\prime},c^{\prime
}\right)  \varepsilon^{\left(  2\right)  }\left(  b^{\prime},c^{\prime
}\right)  ,\ \ \forall a^{\prime},b^{\prime},c^{\prime}\in G, \label{e3}%
\end{align}
make $\varepsilon^{\left(  2\right)  }$ a special 2-cocycle on the group
$\mathcal{G}$ \cite{bourbaki98}. The conditions (\ref{e1})--(\ref{e3}) imply
that $\varepsilon^{\left(  2\right)  }\left(  a^{\prime},b^{\prime}\right)
\neq0$, $\left(  \varepsilon^{\left(  2\right)  }\left(  a^{\prime},a^{\prime
}\right)  \right)  ^{2}=1$, $\varepsilon^{\left(  2\right)  }\left(
a^{\prime},\mathtt{0}\right)  =\varepsilon^{\left(  2\right)  }\left(
\mathtt{0},a^{\prime}\right)  =1$, and $\mathtt{0}\in G$. A graded algebra
$\mathcal{A}_{\mathcal{G}}$ endowed with the commutation factor $\varepsilon
^{\left(  2\right)  }$ satisfying (\ref{ab})--(\ref{e3}) is called an
\textit{almost commutative }($\varepsilon^{\left(  2\right)  }$%
-\textit{commutative}, \textit{color}) \textit{algebra} \cite{sch79,rit/wyl}
(for a review, see \cite{gou/mas/wal}).

The simplest example of a commutation factor is a \textit{sign rule}%
\begin{equation}
\varepsilon^{\left(  2\right)  }\left(  a^{\prime},b^{\prime}\right)  =\left(
-1\right)  ^{<a^{\prime},b^{\prime}>}, \label{e}%
\end{equation}
where $<\ ,\ >:G\times G\rightarrow\mathbb{Z}_{2}$ is a bilinear form
(\textquotedblleft scalar product\textquotedblright), and for $G=\mathbb{Z}%
_{2}$ the form is a product i.e. $<a^{\prime},b^{\prime}>=a^{\prime}b^{\prime
}\equiv gh\in\mathbb{Z}_{2}$. This gives the standard supercommutative algebra
\cite{kac3,nah/rit/sch,berezin}.

In the case $G=\mathbb{Z}_{2}^{n}$ the \textquotedblleft scalar
product\textquotedblright\ $<\ ,\ >:G\times G\rightarrow\mathbb{Z}$ is defined
by (see (\ref{i}))%
\begin{equation}
<\left(  a_{1}^{\prime}\ldots a_{n}^{\prime}\right)  ,\left(  b_{1}^{\prime
}\ldots b_{n}^{\prime}\right)  >=a_{1}^{\prime}b_{1}^{\prime}+\ldots
+a_{n}^{\prime}b_{n}^{\prime}\equiv g_{1}h_{1}+\ldots+g_{n}h_{n}\in\mathbb{Z},
\end{equation}
which leads to $\mathbb{Z}_{2}^{n}$-commutative associative algebras
\cite{Cov/Gra/Pon1}.

A classification of the commutation factors $\varepsilon^{\left(  2\right)  }$
can be made in terms of the \textit{factors} (binary \textit{Schur
multipliers}) $\pi^{\left(  2\right)  }:G\times G\rightarrow\Bbbk^{\times}$
such that \cite{zmu60,sch79}%
\begin{equation}
\varepsilon_{\pi}^{\left(  2\right)  }\left(  a^{\prime},b^{\prime}\right)
=\frac{\pi^{\left(  2\right)  }\left(  a^{\prime},b^{\prime}\right)  }%
{\pi^{\left(  2\right)  }\left(  b^{\prime},a^{\prime}\right)  },\ \ \forall
a^{\prime},b^{\prime}\in G. \label{es}%
\end{equation}

The (Schur) factors $\pi^{\left(  2\right)  }$ naturally appear in projective
representation theory \cite{zmu72} and satisfy the relation%
\begin{equation}
\pi^{\left(  2\right)  }\left(  a^{\prime},b^{\prime}+c^{\prime}\right)
\pi^{\left(  2\right)  }\left(  b^{\prime},c^{\prime}\right)  =\pi^{\left(
2\right)  }\left(  a^{\prime},b^{\prime}\right)  \pi^{\left(  2\right)
}\left(  a^{\prime}+b^{\prime},c^{\prime}\right)  ,\ \ \forall a^{\prime
},b^{\prime}\in G, \label{sa}%
\end{equation}
which follows from associativity. Using (\ref{es}) we can rewrite the
$\varepsilon^{\left(  2\right)  }$-commutation relation (\ref{ab}) of the
graded algebra $\mathcal{A}_{\mathcal{G}}$ in the form%
\begin{equation}
\pi^{\left(  2\right)  }\left(  b^{\prime},a^{\prime}\right)  a\cdot
b=\pi^{\left(  2\right)  }\left(  a^{\prime},b^{\prime}\right)  b\cdot
a,\ \ \ \forall a,b\in A,\ \ \forall a^{\prime},b^{\prime}\in G. \label{sab}%
\end{equation}
We call (\ref{sab}) a $\pi$\textit{-commutativity}. A factor set $\left\{
\pi_{sym}^{\left(  2\right)  }\right\}  $ is \textit{symmetric}, if%
\begin{align}
\pi_{sym}^{\left(  2\right)  }\left(  b^{\prime},a^{\prime}\right)   &
=\pi_{sym}^{\left(  2\right)  }\left(  a^{\prime},b^{\prime}\right)
,\label{psym}\\
\varepsilon_{\pi_{sym}}^{\left(  2\right)  }\left(  a^{\prime},b^{\prime
}\right)   &  =1,\ \ \ \forall a^{\prime},b^{\prime}\in G, \label{esym}%
\end{align}
and so the graded algebra $\mathcal{A}_{\mathcal{G}}$ becomes commutative.

For further details, see \textsc{Section} \ref{sec-proj} and
\cite{sch79,rit/wyl}.

\section{\label{sec-memb}\textsc{Membership function and obscure algebras}}

Now let us consider a generalization of associative algebras and graded
algebras to the case where the degree of truth (containment\ of an element in
the underlying set) is not full or distinct (for a review, see, e.g.
\cite{dub/pra,zim11}). In this case, an element $a\in A$ of the algebra
$\mathcal{A}=\left\langle A\mid\cdot,+\right\rangle $ (over $\Bbbk$) can be
endowed with an additional function, the \textit{membership function}
$\mu:A\rightarrow\left[  0,1\right]  $, $0,1\in\Bbbk$, which \textquotedblleft
measures\textquotedblright\ the degree of truth as a \textquotedblleft grade
of membership\textquotedblright\ \cite{belohl}. If $A$ is a \textit{crisp set}
then $\mu\in\left\{  0,1\right\}  $ and $\mu^{crisp}\left(  a\right)  =1$, if
$a\in A$ and $\mu^{crisp}\left(  a\right)  =0$, if $a\notin A$. Also, we
assume that the zero has full membership $\mu\left(  z\right)  =1$, $z\in A$
(for details, see, e.g. \cite{dub/pra,zim11}). If the membership function
$\mu$ is positive, we can identify an \textit{obscure }(\textit{fuzzy}%
)\textit{ set} $\mathfrak{A}^{\left(  \mu\right)  }$ with support the
universal set $A$ consisting of pairs%
\begin{equation}
\mathfrak{A}^{\left(  \mu\right)  }=\left\{  \left(  a|\mu\left(  a\right)
\right)  \right\}  ,a\in A,\mu\left(  a\right)  >0.\label{am}%
\end{equation}

Sometimes, instead of operations with obscure sets it is convenient to
consider the corresponding operations only in terms of the membership function
$\mu$ itself. Denote $a\wedge b=\min\left\{  a,b\right\}  $, $a\vee
b=\max\left\{  a,b\right\}  $. Then for inclusion $\mathbf{\subseteq}$, union
$\mathbf{\cup}$, intersection $\mathbf{\cap}$ and negation of obscure sets one
can write $\mu\left(  a\right)  \leq\mu\left(  b\right)  $, $\mu\left(
a\right)  \vee\mu\left(  b\right)  $, $\mu\left(  a\right)  \wedge\mu\left(
b\right)  $, $1-\mu\left(  a\right)  $, $a,b\in$ $\mathfrak{A}^{\left(
\mu\right)  }$, respectively.

\begin{definition}
\label{def-balg}An \textit{obscure algebra }$\mathcal{A}\left(  \mu\right)
=\left\langle \mathfrak{A}^{\left(  \mu\right)  }\mid\cdot,+\right\rangle $ is
an algebra over $\Bbbk$ having an obscure set $\mathfrak{A}^{\left(
\mu\right)  }$ as its underlying set, where the following conditions hold%
\begin{align}
\mu\left(  a+b\right)   &  \geq\mu\left(  a\right)  \wedge\mu\left(  b\right)
,\label{m1}\\
\mu\left(  a\cdot b\right)   &  \geq\mu\left(  a\right)  \wedge\mu\left(
b\right)  ,\label{m2}\\
\mu\left(  ka\right)   &  \geq\mu\left(  a\right)  ,\ \ \forall a,b\in
A,\ \ k\in\Bbbk. \label{m3}%
\end{align}

\end{definition}

\begin{definition}
A \textit{obscure unity} $\eta$ is given by $\eta\left(  a\right)  =1$, if
$a=0$ and $\eta\left(  a\right)  =0$, if $a\neq0$.
\end{definition}

For two obscure sets their direct sum $\mathfrak{A}^{\left(  \mu\right)
}=\mathfrak{A}^{\left(  \mu_{g}\right)  }\mathbf{\oplus}$ $\mathfrak{A}%
^{\left(  \mu_{h}\right)  }$ can be defined, if $\mu_{g}\wedge\mu_{h}=\eta$,
and the joint membership function has two \textquotedblleft
nonintersecting\textquotedblright\ components%
\begin{align}
\mu\left(  a\right)   &  =\mu_{g}\left(  a_{\left(  g\right)  }\right)
\vee\mu_{h}\left(  a_{\left(  h\right)  }\right)  ,\mu_{g}\wedge\mu_{h}%
=\eta,\\
a &  =a_{\left(  g\right)  }+a_{\left(  h\right)  },\ a\in\mathfrak{A}%
^{\left(  \mu\right)  },\ a_{\left(  g\right)  }\in\mathfrak{A}^{\left(
\mu_{g}\right)  },\ a_{\left(  h\right)  }\in\mathfrak{A}^{\left(  \mu
_{h}\right)  },\ g,h\in G,
\end{align}
satisfying%
\begin{equation}
\mu\left(  a_{\left(  g\right)  }\cdot a_{\left(  h\right)  }\right)  \geq
\mu_{g}\left(  a_{\left(  g\right)  }\right)  \wedge\mu_{h}\left(  a_{\left(
h\right)  }\right)  .
\end{equation}

\begin{definition}
An\textit{ obscure} $G$-\textit{graded algebra} is a direct sum decomposition%
\begin{equation}
\mathcal{A}_{\mathcal{G}}\left(  \mu\right)  =\mathbf{\bigoplus}_{g\in
G}\mathcal{A}\left(  \mu_{g}\right)  , \label{uu}%
\end{equation}
such that the relation (\ref{aaa}) holds and the joint membership function is%
\begin{equation}
\mu\left(  a\right)  =\underset{g\in G}{\mathbf{\vee}}\mu_{g}\left(
a_{\left(  g\right)  }\right)  ,\ \ \ \ \ a=\sum_{g\in G}a_{\left(  g\right)
},\ \forall a\in\mathfrak{A}^{\left(  \mu\right)  },\ a_{\left(  g\right)
}\in\mathfrak{A}^{\left(  \mu_{g}\right)  },\ g\in G. \label{ma}%
\end{equation}

\end{definition}

\section{\textsc{Membership deformation of commutativity}}

The membership concept leads to the question whether is it possible to
generalize commutativity and $\varepsilon^{\left(  2\right)  }$-commutativity
(\ref{ab}) for the obscure algebras? A positive answer to this question can be
based on a consistent usage of the membership $\mu$ as ordinary functions
which are pre-defined for each element and satisfy some conditions (see, e.g.
\cite{dub/pra,zim11}). In this case, the commutation factor (and the Schur
factors) may depend not only on the element grading, but also on the element
membership function $\mu$, and therefore it becomes \textquotedblleft
individual\textquotedblright\ for each pair of elements, and moreover it can
be continuous. We call this procedure a \textit{membership deformation} of
commutativity\textit{.}

\subsection{Deformation of commutative algebras}

Let us consider an obscure commutative algebra $\mathcal{A}\left(  \mu\right)
=\left\langle \mathfrak{A}^{\left(  \mu\right)  }\mid\cdot,+\right\rangle $
(see \textbf{Definition} \ref{def-balg}), in which $a\cdot b=b\cdot a$,
$\forall a,b\in\mathfrak{A}^{\left(  \mu\right)  }$. Now we \textquotedblleft
deform\textquotedblright\ this commutativity by the membership function $\mu$
and introduce a new algebra product $\left(  \ast\right)  $ for the elements
in $A$ to get a noncommutative algebra.

\begin{definition}
An \textit{obscure membership deformed algebra} is $\mathcal{A}_{\ast}\left(
\mu\right)  =\left\langle \mathfrak{A}^{\left(  \mu\right)  }\mid
\ast,+\right\rangle $ in which the noncommutativity relation is%
\begin{equation}
\mu\left(  b\right)  a\ast b=\mu\left(  a\right)  b\ast a,\ \ \ \forall
a,b\in\mathfrak{A}^{\left(  \mu\right)  }.\label{mab}%
\end{equation}

\end{definition}

\begin{remark}
The relation (\ref{mab}) is reminiscent of (\ref{sab}), where the role of the
Schur factors is played by the membership function $\mu$ which depends on the
element itself, but not on the element's grading. Therefore, the membership
deformation of commutativity (\ref{mab}) is highly nonlinear as opposed to gradation.
\end{remark}

Because in our consideration the membership function $\mu$ cannot be zero, it
follows from (\ref{mab}) that%
\begin{align}
a\ast b  &  =\mathbf{\epsilon}_{\mu}^{\left(  2\right)  }\left(  a,b\right)
b\ast a,\label{abe}\\
\mathbf{\epsilon}_{\mu}^{\left(  2\right)  }\left(  a,b\right)   &  =\frac
{\mu\left(  a\right)  }{\mu\left(  b\right)  },\ \ \ \forall a,b\in
\mathfrak{A}^{\left(  \mu\right)  }, \label{emm}%
\end{align}
where $\mathbf{\epsilon}_{\mu}^{\left(  2\right)  }$ is the \textit{membership
commutation factor}.

\begin{corollary}
An obscure membership deformed algebra $\mathcal{A}_{\ast}\left(  \mu\right)
$ is a (kind of) $\mathbf{\epsilon}_{\mu}$-commutative algebra with a
membership commutation factor (\ref{emm}) which now depends not on the element
gradings (as in (\ref{es})), but on the membership function $\mu$.
\end{corollary}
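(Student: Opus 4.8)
The relations (\ref{abe})--(\ref{emm}) have already been extracted from the defining identity (\ref{mab}) in the text preceding the statement, so the substance of the corollary is interpretive: to recognise that (\ref{abe}) is literally the $\varepsilon^{\left(2\right)}$-commutativity relation (\ref{ab}) with the membership factor $\mathbf{\epsilon}_{\mu}^{\left(2\right)}$ playing the role of $\varepsilon^{\left(2\right)}$, and then to record precisely in what sense the axioms of a commutation factor survive. The plan is therefore to test $\mathbf{\epsilon}_{\mu}^{\left(2\right)}$ against the three defining properties (\ref{e1})--(\ref{e3}).

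First I would verify the double-permutation identity (\ref{e1}), which here doubles as the self-consistency condition for (\ref{abe}): substituting $b\ast a=\mathbf{\epsilon}_{\mu}^{\left(2\right)}\left(b,a\right)a\ast b$ back into (\ref{abe}) forces $\mathbf{\epsilon}_{\mu}^{\left(2\right)}\left(a,b\right)\mathbf{\epsilon}_{\mu}^{\left(2\right)}\left(b,a\right)=1$. A one-line computation confirms this, $\frac{\mu\left(a\right)}{\mu\left(b\right)}\cdot\frac{\mu\left(b\right)}{\mu\left(a\right)}=1$, so in particular $\mathbf{\epsilon}_{\mu}^{\left(2\right)}\left(a,a\right)=1$ and every element commutes with itself. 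The design requirement from the abstract — equal degrees of truth yield the undeformed case — is then immediate: if $\mu\left(a\right)=\mu\left(b\right)$ then $\mathbf{\epsilon}_{\mu}^{\left(2\right)}\left(a,b\right)=1$ and $a\ast b=b\ast a$.

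The crux, and the reason for the qualifier \textquotedblleft(kind of)\textquotedblright, is the pair of bicharacter conditions (\ref{e2})--(\ref{e3}). In the graded setting these encode multiplicativity of the factor in each argument with respect to the group law $+^{\prime}$, guaranteeing compatibility with associativity when an element is permuted past a product. Here the natural analogue of (\ref{e2}) would read $\mathbf{\epsilon}_{\mu}^{\left(2\right)}\left(a,b\ast c\right)=\mathbf{\epsilon}_{\mu}^{\left(2\right)}\left(a,b\right)\mathbf{\epsilon}_{\mu}^{\left(2\right)}\left(a,c\right)$, i.e. $\mu\left(b\ast c\right)=\mu\left(b\right)\mu\left(c\right)/\mu\left(a\right)$, which cannot hold in general: \textbf{Definition} \ref{def-balg} imposes on $\mu$ only the inequality $\mu\left(a\cdot b\right)\geq\mu\left(a\right)\wedge\mu\left(b\right)$ and no multiplicative law, and the putative right-hand side even depends spuriously on $a$, whereas the left does not. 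I therefore do not expect (\ref{e2})--(\ref{e3}) to survive, and the conclusion is the weaker one asserted: $\mathcal{A}_{\ast}\left(\mu\right)$ obeys the $\varepsilon$-commutativity relation (\ref{abe}) with an antisymmetric factor (\ref{emm}) whose arguments are the membership values $\mu\left(a\right),\mu\left(b\right)$ rather than the gradings $a^{\prime},b^{\prime}\in G$ appearing in (\ref{es}). I would close by underscoring exactly this contrast, which is the entire content of the corollary.
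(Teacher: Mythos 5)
Your proposal is correct and takes essentially the same route as the paper: the corollary there carries no separate proof, being an immediate reinterpretation of (\ref{abe})--(\ref{emm}), which the paper derives from (\ref{mab}) by dividing out the nonvanishing $\mu\left(  b\right)  $ in the lines just before the statement. Your extra verification that the analog of (\ref{e1}) holds while the cocycle conditions (\ref{e2})--(\ref{e3}) must fail correctly explains the qualifier \textquotedblleft(kind of)\textquotedblright\ and anticipates exactly what the paper proves separately in \textbf{Proposition} \ref{asser-assoc}.
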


\begin{remark}
As can be seen from (\ref{emm}), the noncommutativity \textquotedblleft
measures\textquotedblright\ the difference between the element degree of
truth\ and $\mathfrak{A}^{\left(  \mu\right)  }$. So if two elements have the
same membership function, they commute.
\end{remark}

\begin{proposition}
In $\mathcal{A}_{\ast}^{\left(  \mu\right)  }$ the membership function
satisfies the equality (cf. (\ref{m3}))%
\begin{equation}
\mu\left(  ka\right)  =\mu\left(  a\right)  ,\ \ \ \forall a\in\mathfrak{A}%
^{\left(  \mu\right)  },\ \ k\in\Bbbk. \label{mka}%
\end{equation}

\end{proposition}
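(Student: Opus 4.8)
The plan is to promote the inequality (\ref{m3}) to the equality (\ref{mka}) by exploiting the invertibility of nonzero scalars in the field $\Bbbk$, together with the fact that $\mathfrak{A}^{(\mu)}$ is closed under the $\Bbbk$-action. I take $k\neq0$ throughout: for $k=0$ one has $0\cdot a=z$ and $\mu(z)=1$, so (\ref{mka}) can hold only for elements of full membership, and the substance of the statement concerns invertible $k$.

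First I would record the two opposite inequalities whose conjunction is the claim. One direction is free: condition (\ref{m3}) already gives $\mu(ka)\geq\mu(a)$. For the reverse direction, fix $a\in\mathfrak{A}^{(\mu)}$ and $k\in\Bbbk^{\times}$, and apply (\ref{m3}) not to $a$ but to the element $ka$, taken with the scalar $k^{-1}$. Since the obscure set is closed under scalar multiplication, $ka$ is again an admissible argument, and (\ref{m3}) reads $\mu(k^{-1}(ka))\geq\mu(ka)$. The field axioms give $k^{-1}(ka)=(k^{-1}k)a=a$, so this is precisely $\mu(a)\geq\mu(ka)$. Chaining the two bounds $\mu(ka)\geq\mu(a)\geq\mu(ka)$ then forces $\mu(ka)=\mu(a)$, which is (\ref{mka}).

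I would also note an alternative derivation that stays closer to the deformed structure of $\mathcal{A}_{\ast}^{(\mu)}$: substituting $b=ka$ into the membership commutation relation (\ref{mab}) and using the $\Bbbk$-bilinearity of $\ast$ yields $\mu(ka)\,k(a\ast a)=\mu(a)\,k(a\ast a)$, from which one would cancel to obtain (\ref{mka}). The obstacle here is that the cancellation is legitimate only when $k(a\ast a)\neq z$; if $a$ is an $\ast$-square-zero element, so that $a\ast a=z$, the argument collapses and says nothing. This is exactly why I prefer the field-invertibility route as the main proof: it invokes only (\ref{m3}) and the group structure of $\Bbbk^{\times}$, and therefore needs no hypothesis ruling out zero divisors or nilpotents. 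The only point requiring care is thus the bookkeeping of the degenerate cases ($k=0$ in general, and $\ast$-square-zero elements in the second approach); once $k$ is taken invertible, the two-sided squeeze is unconditional and the equality (\ref{mka}) follows immediately.
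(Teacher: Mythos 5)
Your proof is correct, but it follows a genuinely different route from the paper's. The paper never touches the inverse $k^{-1}$: it applies the deformed commutation relation (\ref{mab}) to the two pairs $\left(ka,b\right)$ and $\left(a,kb\right)$, uses bilinearity to identify $ka\ast b=a\ast kb=k\left(a\ast b\right)$ and $b\ast ka=kb\ast a=k\left(b\ast a\right)$, and divides the resulting vector equations to obtain the scalar relation $\mu\left(ka\right)\mu\left(kb\right)=\mu\left(a\right)\mu\left(b\right)$, from which (\ref{mka}) follows (implicitly via (\ref{m3}), since positivity of $\mu$ rules out a strict inequality in either factor). Note that this derivation silently assumes $k\neq0$ and, more importantly, that the products involved are nonzero -- if $a\ast b=z$ for all admissible $b$, the two equations degenerate to $z=z$ and give nothing; this is the same kind of obstacle you correctly diagnosed in your own second (discarded) route via $b=ka$. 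Your preferred squeeze argument, $\mu\left(ka\right)\geq\mu\left(a\right)$ and $\mu\left(a\right)=\mu\left(k^{-1}\left(ka\right)\right)\geq\mu\left(ka\right)$, avoids all of these hidden nonvanishing hypotheses and is strictly more general: it uses only axiom (\ref{m3}) and invertibility in $\Bbbk^{\times}$, so it shows the equality holds in \emph{any} obscure algebra of \textbf{Definition} \ref{def-balg}, deformed or not. What the paper's argument buys instead is thematic: it exhibits the equality as a consistency condition forced by the deformation (\ref{mab}) together with scalar bilinearity of $\left(\ast\right)$, which is the point the surrounding text wants to make. Your explicit treatment of the $k=0$ case (where $\mu\left(z\right)=1$ makes (\ref{mka}) fail unless $\mu\left(a\right)=1$) is a caveat the paper's statement and proof both gloss over, and it is worth keeping.
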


\begin{proof}
From the distributivity of the scalar multiplication $k\left(  a\ast b\right)
=ka\ast b=a\ast kb$ and the membership noncommutativity (\ref{mab}), we get%
\begin{align}
\mu\left(  b\right)  ka\ast b &  =\mu\left(  ka\right)  b\ast ka,\label{mk1}\\
\mu\left(  kb\right)  a\ast kb &  =\mu\left(  a\right)  kb\ast a,\label{mk2}%
\end{align}
and so $\mu\left(  ka\right)  \mu\left(  kb\right)  =\mu\left(  a\right)
\mu\left(  b\right)  $, $\forall a\in\mathfrak{A}^{\left(  \mu\right)
},\ \ k\in\Bbbk$, and thus (\ref{mka}) follows.
\end{proof}

In general, the algebra $\mathcal{A}_{\ast}\left(  \mu\right)  $ is not
associative without further conditions (for instance, similar to (\ref{sa}))
on the membership function $\mu$ which is assumed predefined and satisfies the
properties (\ref{m1})--(\ref{m2}) and (\ref{mka}) only.

\begin{proposition}
\label{asser-assoc}The obscure membership deformed algebra $\mathcal{A}_{\ast
}\left(  \mu\right)  $ cannot be associative with any additional conditions on
the membership function $\mu$.
\end{proposition}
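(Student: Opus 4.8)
The plan is to argue by contradiction. I would assume that $\mathcal{A}_{\ast}\left(\mu\right)$ is associative and show that the membership commutation factor (\ref{emm}) then forces an impossible functional relation on $\mu$, independently of any extra hypotheses one might impose. The mechanism is to evaluate a single triple product $a\ast b\ast c$ in two different ways, both reduced to the reversed product $c\ast a\ast b$, and then to compare the scalar coefficients produced by (\ref{abe}).

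First I would compute the coefficient \emph{directly}, treating $a\ast b$ as a single element of $\mathfrak{A}^{\left(\mu\right)}$ and commuting it past $c$ by (\ref{abe})--(\ref{emm}),
\[
\left(a\ast b\right)\ast c=\mathbf{\epsilon}_{\mu}^{\left(2\right)}\left(a\ast b,c\right)\,c\ast\left(a\ast b\right)=\frac{\mu\left(a\ast b\right)}{\mu\left(c\right)}\,c\ast\left(a\ast b\right).
\]
Under the associativity hypothesis the parentheses may be dropped, so the coefficient linking $a\ast b\ast c$ to $c\ast a\ast b$ equals $\mu\left(a\ast b\right)/\mu\left(c\right)$. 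Next I would compute the same coefficient \emph{stepwise}, carrying $c$ to the front one factor at a time: applying $b\ast c=\tfrac{\mu\left(b\right)}{\mu\left(c\right)}c\ast b$ and then $a\ast c=\tfrac{\mu\left(a\right)}{\mu\left(c\right)}c\ast a$ from (\ref{abe}), associativity yields
\[
a\ast b\ast c=\frac{\mu\left(a\right)\mu\left(b\right)}{\mu\left(c\right)^{2}}\,c\ast a\ast b.
\]

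Comparing the two expressions and cancelling the common factor $c\ast a\ast b$ (nonzero for a suitable triple in a nondegenerate algebra) gives
\[
\mu\left(a\ast b\right)=\frac{\mu\left(a\right)\mu\left(b\right)}{\mu\left(c\right)}.
\]
This is the heart of the contradiction: the left-hand side depends only on the pair $\left(a,b\right)$, whereas the right-hand side depends on the third element $c$ through $\mu\left(c\right)$. Since the deformation is genuine precisely when there exist elements of distinct membership value --- the only regime in which (\ref{mab}) is not ordinary commutativity --- I can hold $a,b$ fixed and vary $c$ so that $\mu\left(c\right)$ changes, forcing two incompatible values for the single number $\mu\left(a\ast b\right)$. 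Hence no conditions whatsoever on $\mu$ can render $\mathcal{A}_{\ast}\left(\mu\right)$ associative, which is the assertion.

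The one step needing care, and the main obstacle, is the cancellation of $c\ast a\ast b$. I would dispose of the degenerate possibilities separately: if every such triple product vanishes the algebra is trivial and falls outside the scope of the claim, while if $\mu$ is constant then (\ref{emm}) collapses to $\mathbf{\epsilon}_{\mu}^{\left(2\right)}\equiv1$ and $\mathcal{A}_{\ast}\left(\mu\right)$ is merely the undeformed commutative algebra, again not the deformed object the proposition concerns. Outside these degenerate cases the cancellation is legitimate and the contradiction stands; equivalently, the offending identity $\mu\left(a\ast b\right)\mu\left(c\right)=\mu\left(a\right)\mu\left(b\right)$ is exactly the failure of the membership commutation factor to obey a $2$-cocycle condition of the type (\ref{sa}).
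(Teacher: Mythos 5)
Your proof is correct and takes essentially the same route as the paper: the paper's own argument invokes the cocycle condition $\mathbf{\epsilon}_{\mu}^{(2)}\left(a,b\ast c\right)=\mathbf{\epsilon}_{\mu}^{(2)}\left(a,b\right)\mathbf{\epsilon}_{\mu}^{(2)}\left(a,c\right)$ forced by associativity --- of which your two-way evaluation of the triple product is precisely the explicit derivation --- and its resulting identity $\mu\left(a\right)\mu\left(b\ast c\right)=\mu\left(b\right)\mu\left(c\right)$ is your relation $\mu\left(a\ast b\right)\mu\left(c\right)=\mu\left(a\right)\mu\left(b\right)$ up to relabeling, with the contradiction likewise coming from the mismatch of variable dependence. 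If anything, your handling of the degenerate cases (vanishing triple products, constant $\mu$) is more explicit than the paper's, which simply asserts the identity is impossible for all $a,b,c\in\mathfrak{A}^{\left(\mu\right)}$.
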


\begin{proof}
Since the $\varepsilon$-commutativity (\ref{ab}) and the $\mathbf{\epsilon}%
$-commutativity (\ref{abe}) have the same form, the derivations of
associativity coincide and give a cocycle relation similar to (\ref{e2}%
)--(\ref{e3}) also for $\mathbf{\epsilon}_{\mu}^{\left(  2\right)  }$, e.g.%
\begin{equation}
\mathbf{\epsilon}_{\mu}^{\left(  2\right)  }\left(  a,b\ast c\right)
=\mathbf{\epsilon}_{\mu}^{\left(  2\right)  }\left(  a,b\right)
\mathbf{\epsilon}_{\mu}^{\left(  2\right)  }\left(  a,c\right)  .
\end{equation}
In terms of the membership function (\ref{emm}) this becomes $\mu\left(
a\right)  =\mu\left(  b\right)  \mu\left(  c\right)  \diagup\mu\left(  b\ast
c\right)  $, but this is impossible for all $a,b,c\in\mathfrak{A}^{\left(
\mu\right)  }$. On the other side, after double commutation in $\left(  a\ast
b\right)  \ast c\rightarrow c\ast\left(  b\ast a\right)  $ and $a\ast\left(
b\ast c\right)  \rightarrow\left(  c\ast b\right)  \ast a$ we obtain (if the
associativity of $\left(  \ast\right)  $ is implied)%
\begin{equation}
\mathbf{\epsilon}_{\mu}^{\left(  2\right)  }\left(  a,b\ast c\right)
\mathbf{\epsilon}_{\mu}^{\left(  2\right)  }\left(  b,c\right)
=\mathbf{\epsilon}_{\mu}^{\left(  2\right)  }\left(  a\ast b,c\right)
\mathbf{\epsilon}_{\mu}^{\left(  2\right)  }\left(  a,b\right)  ,
\end{equation}
which in terms of the membership function becomes $\mu\left(  b\right)
^{2}=\mu\left(  a\ast b\right)  \mu\left(  b\ast c\right)  $, and this is also
impossible for arbitrary $a,b,c\in\mathfrak{A}^{\left(  \mu\right)  }$.
\end{proof}

Nevertheless, distributivity of the algebra multiplication and algebra
addition in $\mathcal{A}_{\ast}\left(  \mu\right)  $ is possible, but can only
be one-sided.

\begin{proposition}
The algebra $\mathcal{A}_{\ast}\left(  \mu\right)  $ is right-distributive,
but has the membership deformed left distributivity%
\begin{align}
\mu\left(  b+c\right)  a\ast\left(  b+c\right)   &  =\mu\left(  b\right)
a\ast b+\mu\left(  c\right)  a\ast c,\label{dl}\\
\left(  b+c\right)  \ast a &  =b\ast a+c\ast a,\ \ \ \forall a,b,c\in
\mathfrak{A}^{\left(  \mu\right)  }.\label{dr}%
\end{align}

\end{proposition}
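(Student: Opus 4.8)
The plan is to treat the two claims asymmetrically: first record why the right-distributivity (\ref{dr}) holds as an \emph{undeformed} linearity, and then obtain the deformed law (\ref{dl}) from (\ref{dr}) by commuting the membership factors with the help of the $\mathbf{\epsilon}_{\mu}$-relation (\ref{abe})--(\ref{emm}). Throughout I use that $\mu$ is assumed strictly positive, so that all the divisions appearing are legitimate.

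For the right-distributivity (\ref{dr}): the deformed product $\ast$ is built over the same additive structure as the underlying obscure \emph{commutative} algebra $\mathcal{A}\left(\mu\right)$, whose product $\cdot$ is bi-distributive. Writing $a\ast b$ as the commutative product $a\cdot b$ carrying a membership weight attached to the \emph{right} argument, the weight in $\left(b+c\right)\ast a$ is governed by the fixed right factor $a$ and is therefore insensitive to the additive splitting $b+c$ in the left slot. Expanding $\left(b+c\right)\ast a$ thus reduces to the ordinary left-distributivity of $\cdot$, giving $\left(b+c\right)\ast a=b\ast a+c\ast a$.

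For the deformed left-distributivity (\ref{dl}): I would start from $a\ast\left(b+c\right)$ and commute the whole product using (\ref{abe}),
\[
a\ast\left(b+c\right)=\mathbf{\epsilon}_{\mu}^{\left(2\right)}\!\left(a,b+c\right)\,\left(b+c\right)\ast a=\frac{\mu\left(a\right)}{\mu\left(b+c\right)}\,\left(b+c\right)\ast a.
\]
Now apply the already-established right-distributivity (\ref{dr}) to expand $\left(b+c\right)\ast a=b\ast a+c\ast a$, and commute each summand back by (\ref{abe})--(\ref{emm}), i.e. $b\ast a=\tfrac{\mu\left(b\right)}{\mu\left(a\right)}\,a\ast b$ and $c\ast a=\tfrac{\mu\left(c\right)}{\mu\left(a\right)}\,a\ast c$. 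The factor $\mu\left(a\right)$ cancels in both terms, leaving $a\ast\left(b+c\right)=\tfrac{\mu\left(b\right)}{\mu\left(b+c\right)}\,a\ast b+\tfrac{\mu\left(c\right)}{\mu\left(b+c\right)}\,a\ast c$; multiplying through by $\mu\left(b+c\right)$ yields precisely (\ref{dl}).

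I expect the only genuine subtlety to be the justification of (\ref{dr}) itself, since the abstract commutation relation (\ref{mab}) does not by itself interact with the addition $+$ and hence cannot force either distributive law. One must use that $\ast$ is a deformation sitting over the additive and commutative structure of $\mathcal{A}\left(\mu\right)$, with the membership weight depending on only one argument; it is exactly this one-sidedness of the weight that makes \emph{one} distributive law clean and the other deformed. After that, the passage from (\ref{dr}) to (\ref{dl}) is the short computation above. The same manipulation run in reverse shows that (\ref{dr}) and (\ref{dl}) are in fact equivalent modulo (\ref{abe}), so that the real content of the proposition is the selection of which side remains undeformed.
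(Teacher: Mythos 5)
Your proposal is correct, and its computational core is the same as the paper's --- a one-line transfer between the two distributive laws via the membership commutation relation --- but run in the opposite direction. The paper takes the deformed law (\ref{dl}) as the given structure and applies (\ref{mab}) to each of its three terms, obtaining $\mu\left(  a\right)  \left(  b+c\right)  \ast a=\mu\left(  a\right)  b\ast a+\mu\left(  a\right)  c\ast a$ and hence (\ref{dr}) after cancelling $\mu\left(  a\right)  >0$; you instead take (\ref{dr}) as primitive and recover (\ref{dl}) by commuting with (\ref{abe})--(\ref{emm}). Since, as you note, the two laws are equivalent modulo (\ref{abe}), the two directions carry identical information, and your closing remark --- that the real content of the proposition is the choice of which side remains undeformed --- is precisely what the paper's proof establishes. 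Where you go beyond the paper is the attempted independent justification of (\ref{dr}): your claim that the membership weight is ``attached to the right argument'' implicitly invokes a concrete model of the product (essentially $a\ast b=a\cdot b\diagup\mu\left(  b\right)  $, which indeed satisfies (\ref{mab}) and both distributive laws), whereas $\mathcal{A}_{\ast}\left(  \mu\right)  $ is defined in the paper abstractly by the relation (\ref{mab}) alone. As you correctly observe, (\ref{mab}) by itself forces neither distributive law, so one of (\ref{dl}), (\ref{dr}) must be postulated as part of the deformation data; the paper postulates (\ref{dl}) and derives (\ref{dr}), you do the reverse. Either choice is legitimate, but your heuristic for (\ref{dr}) should be presented as a model-dependent motivation, not as a consequence of the paper's axioms.
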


\begin{proof}
Applying the membership noncommutativity (\ref{mab}) to (\ref{dl}), we obtain
$\mu\left(  a\right)  \left(  b+c\right)  \ast a=\mu\left(  a\right)  b\ast
a+\mu\left(  a\right)  c\ast a$, and then (\ref{dr}), because $\mu>0$.
\end{proof}

\begin{theorem}
The binary obscure algebra $\mathcal{A}_{\ast}\left(  \mu\right)
=\left\langle \mathfrak{A}^{\left(  \mu\right)  }\mid\ast,+\right\rangle $ is
necessarily nonassociative and $\mathbf{\epsilon}_{\mu}^{\left(  2\right)  }%
$-commutative (\ref{abe}), right-distributive (\ref{dr}) and membership
deformed left-distributive (\ref{dl}).
\end{theorem}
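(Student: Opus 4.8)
The plan is to recognize that this theorem is purely a consolidation of the results obtained earlier in the subsection: each of its four assertions has already been verified, so the proof reduces to assembling them and making precise the word \emph{necessarily}. No fresh computation is needed.

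First I would dispatch the $\mathbf{\epsilon}_{\mu}^{\left(2\right)}$-commutativity. The defining relation (\ref{mab}) reads $\mu\left(b\right)a\ast b=\mu\left(a\right)b\ast a$, and since in the present framework the membership function is strictly positive, dividing by $\mu\left(b\right)$ yields exactly (\ref{abe}) with the membership commutation factor (\ref{emm}). Thus $\mathcal{A}_{\ast}\left(\mu\right)$ is $\mathbf{\epsilon}_{\mu}^{\left(2\right)}$-commutative. The two distributivity claims require no new work either: right-distributivity (\ref{dr}) and the membership deformed left-distributivity (\ref{dl}) are precisely the statement of the preceding proposition, which derived them from (\ref{mab}) together with scalar distributivity and $\mu>0$.

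The only genuine obstacle is the word \emph{necessarily}: it must be read as the assertion that no choice of $\mu$ can restore associativity, not merely that associativity has not been imposed. This is exactly Proposition \ref{asser-assoc}, which I would simply invoke. Its argument shows that demanding the $\mathbf{\epsilon}_{\mu}$-factor obey the associativity-forced cocycle relation (in analogy with (\ref{sa})) translates, via (\ref{emm}), into constraints such as $\mu\left(a\right)\mu\left(b\ast c\right)=\mu\left(b\right)\mu\left(c\right)$ and $\mu\left(b\right)^{2}=\mu\left(a\ast b\right)\mu\left(b\ast c\right)$; since neither can hold for all triples $a,b,c\in\mathfrak{A}^{\left(\mu\right)}$, associativity is excluded outright. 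With the commutativity, the two distributivity properties, and this impossibility in hand, the four assertions combine to give the theorem.
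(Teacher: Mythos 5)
Your proposal is correct and follows essentially the same route as the paper: the theorem there is a consolidation of the immediately preceding results, with commutativity (\ref{abe})--(\ref{emm}) read off from (\ref{mab}) using $\mu>0$, the distributivity pair (\ref{dl})--(\ref{dr}) supplied by the preceding proposition, and the necessity of nonassociativity supplied by Proposition \ref{asser-assoc}. Your reading of \emph{necessarily} as the impossibility of restoring associativity by any condition on $\mu$ is exactly the content of that proposition, so nothing is missing.
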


\begin{remark}
If membership noncommutativity is valid for generators of the algebra only,
then the form of (\ref{abe}) coincides with that of the quantum polynomial
algebra \cite{art97}, but the latter is two-sided distributive, in distinction
to the obscure algebra $\mathcal{A}_{\ast}\left(  \mu\right)  $.
\end{remark}

\begin{example}
[\textsf{Deformed Weyl algebra}]Consider an obscure algebra $\mathcal{A}%
_{\odot}^{Weyl}\left(  \mu\right)  =\left\langle \mathfrak{A}^{\left(
\mu\right)  }\mid\odot,+\right\rangle $ generated by two generators
$x,y\in\mathfrak{A}^{\left(  \mu\right)  }$ satisfying the Weyl-like relation
(cf. (\ref{mab}))%
\begin{equation}
\mu\left(  y\right)  x\odot y=\mu\left(  x\right)  y\odot x+\mu\left(  x\odot
y\right)  e,\ \ \ x,y,e\in\mathfrak{A}^{\left(  \mu\right)  }. \label{mxy}%
\end{equation}
We call $\mathcal{A}_{\odot}^{Weyl}\left(  \mu\right)  $ a \textit{membership
deformed Weyl algebra}. Because the membership function is predefined, and
$\mu\left(  x\odot y\right)  $ is not a symplectic form, the algebra
$\mathcal{A}_{\odot}^{Weyl}\left(  \mu\right)  $ is not isomorphic to the
ordinary Weyl algebra (see, e.g., \cite{lam01}). In the same way, the graded
Weyl algebra \cite{til77} can be membership deformed in a similar way.
\end{example}

\begin{remark}
The special property of the membership\ noncommutativity is the fact that each
pair of elements has their own \textquotedblleft individual\textquotedblright%
\ commutation factor $\mathbf{\epsilon}$ which depends on the membership
function (\ref{emm}) that can also be continuous.
\end{remark}

\subsection{\label{subsec-def-e}Deformation of $\varepsilon$-commutative
algebras}

Here we apply the membership deformation procedure (\ref{mab}) to the obscure
$G$-graded algebras (\ref{uu}) which are $\varepsilon^{\left(  2\right)  }%
$-commutative (\ref{ab}). We now \textquotedblleft deform\textquotedblright%
\ (\ref{sab}) by analogy with (\ref{mab}).

Let $\mathcal{A}_{\mathcal{G}}\left(  \mu\right)  $ be a binary obscure
$G$-graded algebra (\ref{uu}) which is $\varepsilon^{\left(  2\right)  }%
$-commutative with the Schur factor $\pi^{\left(  2\right)  }$ (\ref{es}).

\begin{definition}
An \textit{obscure membership deformed }$\varepsilon^{\left(  2\right)  }%
$\textit{-commutative }$G$\textit{-graded algebra} is $\mathcal{A}%
_{\mathcal{G}\star}\left(  \mu\right)  =\left\langle \mathfrak{A}^{\left(
\mu\right)  }\mid\star,+\right\rangle $ in which the noncommutativity relation
is given by%
\begin{equation}
\mu\left(  b\right)  \pi^{\left(  2\right)  }\left(  b^{\prime},a^{\prime
}\right)  a\star b=\mu\left(  a\right)  \pi^{\left(  2\right)  }\left(
a^{\prime},b^{\prime}\right)  b\star a,\ \ \ \forall a,b\in\mathfrak{A}%
^{\left(  \mu\right)  },\forall a^{\prime},b^{\prime}\in G.\label{mp2}%
\end{equation}

\end{definition}

Because both functions $\pi^{\left(  2\right)  }$ and $\mu$ are nonvanishing,
we can combine (\ref{es}) and (\ref{emm}).

\begin{definition}
\label{def-doub-ee} An algebra $\mathcal{A}_{\mathcal{G}\star}\left(
\mu\right)  $ is called a \textit{double} $\varepsilon_{\pi}^{\left(
2\right)  }/\mathbf{\epsilon}_{\mu}^{\left(  2\right)  }$-\textit{commutative
algebra}, when%
\begin{align}
a\star b  &  =\varepsilon_{\pi}^{\left(  2\right)  }\left(  a^{\prime
},b^{\prime}\right)  \mathbf{\epsilon}_{\mu}^{\left(  2\right)  }\left(
a,b\right)  b\star a,\ \ \ \forall a,b\in\mathfrak{A}^{\left(  \mu\right)
},\forall a^{\prime},b^{\prime}\in G\label{ep}\\
\ \ \mathbf{\epsilon}_{\mu}^{\left(  2\right)  }\left(  a,b\right)   &
=\frac{\mu\left(  a\right)  }{\mu\left(  b\right)  },\label{em}\\
\varepsilon_{\pi}^{\left(  2\right)  }\left(  a^{\prime},b^{\prime}\right)
&  =\frac{\pi^{\left(  2\right)  }\left(  a^{\prime},b^{\prime}\right)  }%
{\pi^{\left(  2\right)  }\left(  b^{\prime},a^{\prime}\right)  },
\end{align}
where $\varepsilon_{\pi}^{\left(  2\right)  }$ is the grading commutation
factor and $\mathbf{\epsilon}_{\mu}^{\left(  2\right)  }$ is the membership
commutation factor.
\end{definition}

In the \textsf{first version}, we assume that $\varepsilon_{\pi}^{\left(
2\right)  }$ is still a cocycle and satisfies (\ref{e1})--(\ref{e3}), then in
$\mathcal{A}_{\mathcal{G}\star}\left(  \mu\right)  $ the relation (\ref{mka})
is satisfied as well, because the Schur factors cancel in the derivation from
(\ref{mk1})--(\ref{mk2}). For the same reason \textbf{Assertion}
\ref{asser-assoc} holds, and therefore the double $\varepsilon_{\pi}^{\left(
2\right)  }/\mathbf{\epsilon}_{\mu}^{\left(  2\right)  }$-commutative algebra
$\mathcal{A}_{\mathcal{G}\star}\left(  \mu\right)  $ with the fixed grading
commutation factor $\varepsilon_{\pi}^{\left(  2\right)  }$ is necessarily nonassociative.

As the \textsf{second version}, we consider the case when the grading
commutation factor does not satisfy (\ref{e1})--(\ref{e3}), but the double
commutation factor does satisfy them (the membership function is fixed, being
predefined for each element), which can lead to an associative algebra.

\begin{proposition}
A double $\varepsilon_{\pi\mu}^{\left(  2\right)  }/\mathbf{\epsilon}_{\mu
}^{\left(  2\right)  }$-commutative algebra $\mathcal{A}_{\mathcal{G}%
\circledast}\left(  \mu\right)  =\left\langle \mathfrak{A}^{\left(
\mu\right)  }\mid\circledast,+\right\rangle $ with%
\begin{equation}
a\circledast b=\varepsilon_{\pi\mu}^{\left(  2\right)  }\left(  a^{\prime
},b^{\prime}\right)  \mathbf{\epsilon}_{\mu}^{\left(  2\right)  }\left(
a,b\right)  b\circledast a,\ \ \mathbf{\epsilon}_{\mu}^{\left(  2\right)
}\left(  a,b\right)  =\frac{\mu\left(  a\right)  }{\mu\left(  b\right)
}\label{ab1}%
\end{equation}
is associative, if the noncocycle commutation factor $\varepsilon_{\pi\mu
}^{\left(  2\right)  }$ satisfies the \textquotedblleft membership deformed
cocycle-like\textquotedblright\ conditions%
\begin{align}
\varepsilon_{\pi\mu}^{\left(  2\right)  }\left(  a^{\prime},b^{\prime}\right)
\varepsilon_{\pi\mu}^{\left(  2\right)  }\left(  b^{\prime},a^{\prime}\right)
&  =\frac{1}{\mathbf{\epsilon}_{\mu}^{\left(  2\right)  }\left(  a,b\right)
\mathbf{\epsilon}_{\mu}^{\left(  2\right)  }\left(  b,a\right)  }%
,\label{em1}\\
\varepsilon_{\pi\mu}^{\left(  2\right)  }\left(  a^{\prime},b^{\prime
}+c^{\prime}\right)   &  =\varepsilon_{\pi\mu}^{\left(  2\right)  }\left(
a^{\prime},b^{\prime}\right)  \varepsilon_{\pi\mu}^{\left(  2\right)  }\left(
a^{\prime},c^{\prime}\right)  \frac{\mathbf{\epsilon}_{\mu}^{\left(  2\right)
}\left(  a,b\right)  \mathbf{\epsilon}_{\mu}^{\left(  2\right)  }\left(
a,c\right)  }{\mathbf{\epsilon}_{\mu}^{\left(  2\right)  }\left(
a,b\circledast c\right)  },\label{em2}\\
\varepsilon_{\pi\mu}^{\left(  2\right)  }\left(  a^{\prime}+b^{\prime
},c^{\prime}\right)   &  =\varepsilon_{\pi\mu}^{\left(  2\right)  }\left(
a^{\prime},c^{\prime}\right)  \varepsilon_{\pi\mu}^{\left(  2\right)  }\left(
b^{\prime},c^{\prime}\right)  \frac{\mathbf{\epsilon}_{\mu}^{\left(  2\right)
}\left(  a,c\right)  \mathbf{\epsilon}_{\mu}^{\left(  2\right)  }\left(
b,c\right)  }{\mathbf{\epsilon}_{\mu}^{\left(  2\right)  }\left(  a\circledast
b,c\right)  },\label{em3}\\
\forall a,b &  \in\mathfrak{A}^{\left(  \mu\right)  },\forall a^{\prime
},b^{\prime}\in G.\nonumber
\end{align}

\begin{proof}
Now indeed the double commutation factor (the product of the grading and
membership factors) $\varepsilon_{\pi\mu}^{\left(  2\right)  }\mathbf{\epsilon
}_{\mu}^{\left(  2\right)  }$ satisfies (\ref{e1})--(\ref{e3}). Then
(\ref{em1})--(\ref{em3}) immediately follow.
\end{proof}
\end{proposition}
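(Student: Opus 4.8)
The plan is to observe that the defining relation (\ref{ab1}) has exactly the shape of the ordinary $\varepsilon^{\left(2\right)}$-commutation relation (\ref{ab}) once the two factors are collected into a single \emph{double commutation factor}
\[
E\left(a,b\right)=\varepsilon_{\pi\mu}^{\left(2\right)}\left(a^{\prime},b^{\prime}\right)\mathbf{\epsilon}_{\mu}^{\left(2\right)}\left(a,b\right),
\]
so that (\ref{ab1}) reads $a\circledast b=E\left(a,b\right)b\circledast a$. Since this is formally identical to (\ref{ab}), the very computations that turned (\ref{ab}) into the cocycle conditions (\ref{e1})--(\ref{e3}) apply verbatim to $\circledast$: applying the commutation relation twice returns $E\left(a,b\right)E\left(b,a\right)=1$, while the two ways of reassociating and fully commuting a triple product $a\circledast b\circledast c$ force multiplicativity of $E$ in each slot. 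Hence the product $\circledast$ is associative precisely when $E$ satisfies
\[
E\left(a,b\right)E\left(b,a\right)=1,\quad E\left(a,b\circledast c\right)=E\left(a,b\right)E\left(a,c\right),\quad E\left(a\circledast b,c\right)=E\left(a,c\right)E\left(b,c\right).
\]

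The next step is a pure translation. I would substitute $E=\varepsilon_{\pi\mu}^{\left(2\right)}\mathbf{\epsilon}_{\mu}^{\left(2\right)}$ into these three conditions and separate the grading factors from the membership factors, using that the $\circledast$-product respects the grading, i.e. $\left(b\circledast c\right)^{\prime}=b^{\prime}+c^{\prime}$, so the grading argument of $\varepsilon_{\pi\mu}^{\left(2\right)}$ splits additively in the second and third conditions. Because both $\pi^{\left(2\right)}$ and $\mu$ are nonvanishing, the membership factors can be moved freely across the equality and solved for; isolating $\varepsilon_{\pi\mu}^{\left(2\right)}$ on one side then produces exactly (\ref{em1}), (\ref{em2}) and (\ref{em3}), respectively. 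Read in the reverse direction, this shows that once $\varepsilon_{\pi\mu}^{\left(2\right)}$ is assumed to obey (\ref{em1})--(\ref{em3}), the combined factor $E$ satisfies the genuine cocycle conditions (\ref{e1})--(\ref{e3}), whence associativity of $\mathcal{A}_{\mathcal{G}\circledast}\left(\mu\right)$ follows.

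The conceptual point, and the place where care is needed, is the contrast with \textbf{Assertion} \ref{asser-assoc}. There the pure membership factor $\mathbf{\epsilon}_{\mu}^{\left(2\right)}$ was shown to be incapable of satisfying (\ref{e1})--(\ref{e3}) by itself, since those conditions collapse into relations on $\mu$ such as $\mu\left(b\right)^{2}=\mu\left(a\ast b\right)\mu\left(b\ast c\right)$ that cannot hold for arbitrary elements. The resolution is that $\varepsilon_{\pi\mu}^{\left(2\right)}$ is deliberately \emph{not} required to be a cocycle: it absorbs precisely the membership-dependent defect, and (\ref{em1})--(\ref{em3}) are best understood as a solvability prescription for $\varepsilon_{\pi\mu}^{\left(2\right)}$ in terms of the predefined $\mu$. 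The only genuine verifications are therefore that the grading of $b\circledast c$ is indeed $b^{\prime}+c^{\prime}$, which legitimises the additive split of the grading argument, and that the membership factors are nonzero so that the divisions in (\ref{em2})--(\ref{em3}) are valid; both hold by construction of the obscure graded algebra and the positivity of $\mu$.
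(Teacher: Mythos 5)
Your proposal is correct and takes essentially the same route as the paper's own proof: both rest on the single observation that the combined double factor $\varepsilon_{\pi\mu}^{\left(2\right)}\mathbf{\epsilon}_{\mu}^{\left(2\right)}$ must satisfy the genuine cocycle conditions (\ref{e1})--(\ref{e3}), from which (\ref{em1})--(\ref{em3}) follow by isolating the grading factor. You merely spell out the translation step (the grading additivity $\left(b\circledast c\right)^{\prime}=b^{\prime}+c^{\prime}$ and the nonvanishing of $\pi^{\left(2\right)}$ and $\mu$) that the paper compresses into the words ``immediately follow.''
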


We can find the deformed equation for the Schur-like factors (similar to
(\ref{em1})--(\ref{em3}))%
\begin{equation}
\varepsilon_{\pi\mu}^{\left(  2\right)  }\left(  a^{\prime},b^{\prime}\right)
=\frac{\pi_{\mu}^{\left(  2\right)  }\left(  a^{\prime},b^{\prime}\right)
}{\pi_{\mu}^{\left(  2\right)  }\left(  b^{\prime},a^{\prime}\right)  },
\label{epm}%
\end{equation}
such that the following \textquotedblleft membership
deformed\textquotedblright\ commutation takes place (see (\ref{ab1}))%
\begin{equation}
\pi_{\mu}^{\left(  2\right)  }\left(  b^{\prime},a^{\prime}\right)  \mu\left(
b\right)  a\circledast b=\pi_{\mu}^{\left(  2\right)  }\left(  a^{\prime
},b^{\prime}\right)  \mu\left(  a\right)  b\circledast a,\ \ \ \forall
a,b\in\mathfrak{A}^{\left(  \mu\right)  },\forall a^{\prime},b^{\prime}\in G,
\label{pm}%
\end{equation}
and the algebra $\mathcal{A}_{\mathcal{G}\circledast}\left(  \mu\right)  $
becomes associative (in distinct with (\ref{mp2}), where the grading
commutation factor $\varepsilon_{\pi}^{\left(  2\right)  }$, satisfies
(\ref{e1})--(\ref{e3}), and the algebra multiplications $\left(  \star\right)
$ are different).

Thus the deformed binary Schur-like factors $\pi_{\mu}^{\left(  2\right)  }$
of the obscure membership deformed associative\textit{ }double commutative
algebra $\mathcal{A}_{\mathcal{G}\circledast}\left(  \mu\right)  $ satisfy%
\begin{align}
\pi_{\mu}^{\left(  2\right)  }\left(  a^{\prime},b^{\prime}+c^{\prime}\right)
\pi_{\mu}^{\left(  2\right)  }\left(  b^{\prime},c^{\prime}\right)   &
=\pi_{\mu}^{\left(  2\right)  }\left(  a^{\prime},b^{\prime}\right)  \pi_{\mu
}^{\left(  2\right)  }\left(  a^{\prime}+b^{\prime},c^{\prime}\right)
\frac{\mu\left(  a\circledast b\right)  }{\mu\left(  b\right)  },\\
\forall a,b &  \in\mathfrak{A}^{\left(  \mu\right)  },\ \ \forall a^{\prime
},b^{\prime}\in G,\nonumber
\end{align}
which should be compared with the corresponding nondeformed relation (\ref{sa}).

\subsection{Double $\varepsilon\epsilon$-Lie algebras}

Consider the second version of an obscure double $\varepsilon_{\pi\mu
}^{\left(  2\right)  }/\mathbf{\epsilon}_{\mu}^{\left(  2\right)  }%
$-commutative algebra $\mathcal{A}_{\mathcal{G}\circledast}\left(  \mu\right)
$ defined in (\ref{ab1})--(\ref{em3}) and construct a corresponding analog of
the Lie algebra, following the same procedure as for associative $\varepsilon
$-commutative algebras \cite{sch79,rit/wyl,mon97}.

Take $\mathcal{A}_{\mathcal{G}\circledast}\left(  \mu\right)  $ and define a
\textit{double }$\varepsilon\epsilon$\textit{-Lie bracket} $\mathit{L}%
_{\varepsilon\epsilon}:\mathfrak{A}^{\left(  \mu\right)  }\otimes
\mathfrak{A}^{\left(  \mu\right)  }\rightarrow\mathfrak{A}^{\left(
\mu\right)  }$ by%
\begin{equation}
\mathit{L}_{\varepsilon\epsilon}\left[  a,b\right]  =a\circledast
b-\varepsilon_{\pi\mu}^{\left(  2\right)  }\left(  a^{\prime},b^{\prime
}\right)  \mathbf{\epsilon}_{\mu}^{\left(  2\right)  }\left(  a,b\right)
b\circledast a,\ \ \ \ \forall a,b\in\mathfrak{A}^{\left(  \mu\right)
},\ \ \forall a^{\prime},b^{\prime}\in G, \label{lab}%
\end{equation}
where $\varepsilon_{\pi\mu}^{\left(  2\right)  }$ and $\mathbf{\epsilon}_{\mu
}^{\left(  2\right)  }$ are given in (\ref{ab1}) and (\ref{epm}), respectively.

\begin{proposition}
The double $\varepsilon\epsilon$-Lie bracket is $\varepsilon\epsilon$-skew
commutative, i.e. it satisfies double commutativity with the commutation
factor $\left(  -\varepsilon_{\pi\mu}^{\left(  2\right)  }\mathbf{\epsilon
}_{\mu}^{\left(  2\right)  }\right)  $.
\end{proposition}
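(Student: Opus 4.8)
The plan is to verify the claimed double commutativity by a direct computation from the definition (\ref{lab}), reducing the whole statement to a single scalar identity that the structure factors already satisfy. First I would write out the bracket in both orders from (\ref{lab}),
\begin{align}
\mathit{L}_{\varepsilon\epsilon}\left[  a,b\right]   &  =a\circledast
b-\varepsilon_{\pi\mu}^{\left(  2\right)  }\left(  a^{\prime},b^{\prime
}\right)  \mathbf{\epsilon}_{\mu}^{\left(  2\right)  }\left(  a,b\right)
b\circledast a,\\
\mathit{L}_{\varepsilon\epsilon}\left[  b,a\right]   &  =b\circledast
a-\varepsilon_{\pi\mu}^{\left(  2\right)  }\left(  b^{\prime},a^{\prime
}\right)  \mathbf{\epsilon}_{\mu}^{\left(  2\right)  }\left(  b,a\right)
a\circledast b,
\end{align}
the target being $\mathit{L}_{\varepsilon\epsilon}[a,b]=\left(-\varepsilon_{\pi\mu}^{\left(2\right)}(a',b')\mathbf{\epsilon}_\mu^{\left(2\right)}(a,b)\right)\mathit{L}_{\varepsilon\epsilon}[b,a]$.

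Second, I would substitute and expand the right-hand side. Multiplying the prefactor through $\mathit{L}_{\varepsilon\epsilon}[b,a]$, the coefficient of $b\circledast a$ matches that of $\mathit{L}_{\varepsilon\epsilon}[a,b]$ by inspection, while the coefficient of $a\circledast b$ becomes the product $\varepsilon_{\pi\mu}^{(2)}(a',b')\varepsilon_{\pi\mu}^{(2)}(b',a')\mathbf{\epsilon}_\mu^{(2)}(a,b)\mathbf{\epsilon}_\mu^{(2)}(b,a)$. Hence the entire claim collapses to the single scalar identity
\begin{equation}
\varepsilon_{\pi\mu}^{\left(  2\right)  }\left(  a^{\prime},b^{\prime}\right)
\varepsilon_{\pi\mu}^{\left(  2\right)  }\left(  b^{\prime},a^{\prime}\right)
\mathbf{\epsilon}_{\mu}^{\left(  2\right)  }\left(  a,b\right)  \mathbf{\epsilon}_{\mu}^{\left(  2\right)  }\left(  b,a\right)  =1.
\end{equation}

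Third, I would close this identity from the data already in hand. The membership factor is antisymmetric by its very definition (\ref{em}): since $\mathbf{\epsilon}_\mu^{(2)}(a,b)=\mu(a)/\mu(b)$, one has immediately $\mathbf{\epsilon}_\mu^{(2)}(a,b)\mathbf{\epsilon}_\mu^{(2)}(b,a)=1$. Feeding this into the deformed cocycle-like relation (\ref{em1}) shows that the grading factor is antisymmetric as well, $\varepsilon_{\pi\mu}^{(2)}(a',b')\varepsilon_{\pi\mu}^{(2)}(b',a')=1$; the product of these two antisymmetries is exactly the required scalar identity.

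I expect no genuine obstacle here: the content is entirely carried by relation (\ref{em1}), which is precisely the membership-deformed replacement for the commutation-factor symmetry (\ref{e1}), and the manipulation is the same bilinear one that establishes $\varepsilon$-skew symmetry of an ordinary $\varepsilon$-Lie bracket. The only point requiring care is the bookkeeping of which arguments are grading degrees (primed, lying in $G$) and which are algebra elements (lying in $\mathfrak{A}^{(\mu)}$), so that the two distinct antisymmetries — one in $G$ for $\varepsilon_{\pi\mu}^{(2)}$ and one in the elements for $\mathbf{\epsilon}_\mu^{(2)}$ — are invoked on the correct slots and combine into the single factor $-\varepsilon_{\pi\mu}^{(2)}\mathbf{\epsilon}_\mu^{(2)}$.
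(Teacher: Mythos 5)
Your proposal is correct and takes essentially the same approach as the paper: both arguments reduce the $\varepsilon\epsilon$-skew commutativity of $\mathit{L}_{\varepsilon\epsilon}$ to the single scalar identity $\varepsilon_{\pi\mu}^{\left(2\right)}\left(a^{\prime},b^{\prime}\right)\varepsilon_{\pi\mu}^{\left(2\right)}\left(b^{\prime},a^{\prime}\right)\mathbf{\epsilon}_{\mu}^{\left(2\right)}\left(a,b\right)\mathbf{\epsilon}_{\mu}^{\left(2\right)}\left(b,a\right)=1$, which is precisely relation (\ref{em1}). The paper organizes the computation by multiplying (\ref{lab}) by $\varepsilon_{\pi\mu}^{\left(2\right)}\left(b^{\prime},a^{\prime}\right)\mathbf{\epsilon}_{\mu}^{\left(2\right)}\left(b,a\right)$ and recognizing $-\mathit{L}_{\varepsilon\epsilon}\left[b,a\right]$, while you expand the target and match coefficients --- the same manipulation run in the opposite direction; your additional observation that $\mathbf{\epsilon}_{\mu}^{\left(2\right)}$ is antisymmetric by its definition (\ref{em}), forcing $\varepsilon_{\pi\mu}^{\left(2\right)}$ to be antisymmetric as well via (\ref{em1}), is a harmless refinement.
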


\begin{proof}
Multiply (\ref{lab}) by $\varepsilon_{\pi\mu}^{\left(  2\right)  }\left(
b^{\prime},a^{\prime}\right)  \mathbf{\epsilon}_{\mu}^{\left(  2\right)
}\left(  b,a\right)  $ and use (\ref{em1}) to obtain%
\begin{equation}
\varepsilon_{\pi\mu}^{\left(  2\right)  }\left(  b^{\prime},a^{\prime}\right)
\mathbf{\epsilon}_{\mu}^{\left(  2\right)  }\left(  b,a\right)  \mathit{L}%
_{\varepsilon\epsilon}\left[  a,b\right]  =\varepsilon_{\pi\mu}^{\left(
2\right)  }\left(  b^{\prime},a^{\prime}\right)  \mathbf{\epsilon}_{\mu
}^{\left(  2\right)  }\left(  b,a\right)  a\circledast b-b\circledast
a=-\mathit{L}_{\varepsilon\epsilon}\left[  b,a\right]  .
\end{equation}

Therefore,%
\begin{equation}
\mathit{L}_{\varepsilon\epsilon}\left[  a,b\right]  =-\varepsilon_{\pi\mu
}^{\left(  2\right)  }\left(  a^{\prime},b^{\prime}\right)  \mathbf{\epsilon
}_{\mu}^{\left(  2\right)  }\left(  a,b\right)  \mathit{L}_{\varepsilon
\epsilon}\left[  b,a\right]  , \label{leb}%
\end{equation}
which should be compared with (\ref{ab1}).
\end{proof}

\begin{proposition}
The double $\varepsilon\epsilon$-Lie bracket satisfies the membership deformed
$\varepsilon\epsilon$-Jacobi identity%
\begin{align}
&  \varepsilon_{\pi\mu}^{\left(  2\right)  }\left(  a^{\prime},b^{\prime
}\right)  \mathbf{\epsilon}_{\mu}^{\left(  2\right)  }\left(  a,b\right)
\mathit{L}_{\varepsilon\epsilon}\left[  a,\mathit{L}_{\varepsilon\epsilon
}\left[  b,c\right]  \right]  +cyclic\ permutations =0,\label{jac}\\
&  \forall a,b,c \in\mathfrak{A}^{\left(  \mu\right)  },\ \ \forall a^{\prime
},b^{\prime},c^{\prime}\in G.\nonumber
\end{align}

\end{proposition}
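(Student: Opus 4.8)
The plan is to recognise that, once the grading and membership data are fused into the single factor $\Omega\left(a,b\right):=\varepsilon_{\pi\mu}^{\left(2\right)}\left(a^{\prime},b^{\prime}\right)\mathbf{\epsilon}_{\mu}^{\left(2\right)}\left(a,b\right)$, the bracket (\ref{lab}) is exactly the color-commutator $\mathit{L}_{\varepsilon\epsilon}\left[a,b\right]=a\circledast b-\Omega\left(a,b\right)b\circledast a$, and (\ref{jac}) is nothing but the classical $\varepsilon$-Lie (Scheunert) Jacobi identity for the \emph{associative} algebra $\mathcal{A}_{\mathcal{G}\circledast}\left(\mu\right)$ carrying the commutation factor $\Omega$. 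The essential input, already secured in the associativity argument for $\mathcal{A}_{\mathcal{G}\circledast}\left(\mu\right)$, is that the \emph{combined} factor $\Omega$ (not either of its two pieces separately) obeys (\ref{e1})--(\ref{e3}) once a product is assigned grading $b^{\prime}+c^{\prime}$ and membership $\mu\left(b\circledast c\right)$; concretely $\Omega\left(a,b\right)\Omega\left(b,a\right)=1$, $\Omega\left(a,b\circledast c\right)=\Omega\left(a,b\right)\Omega\left(a,c\right)$ and $\Omega\left(a\circledast b,c\right)=\Omega\left(a,c\right)\Omega\left(b,c\right)$. These, being equivalent to (\ref{em1})--(\ref{em3}), are the only facts about $\mu$ and $\pi_{\mu}^{\left(2\right)}$ I would invoke.

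Before iterating the bracket I would check that the outer factor is well defined. The inner bracket $\mathit{L}_{\varepsilon\epsilon}\left[b,c\right]=b\circledast c-\Omega\left(b,c\right)c\circledast b$ is $G$-homogeneous of degree $b^{\prime}+c^{\prime}$, and by the $\pi_{\mu}$-commutativity (\ref{pm}) its two summands are scalar multiples of one another, so by (\ref{mka}) they carry the common membership value $\mu\left(b\circledast c\right)=\mu\left(c\circledast b\right)$. Hence $\Omega\left(a,\mathit{L}_{\varepsilon\epsilon}\left[b,c\right]\right)=\Omega\left(a,b\circledast c\right)=\Omega\left(a,b\right)\Omega\left(a,c\right)$ is unambiguous, and the bracket may be nested exactly as in the crisp color case.

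The computation is then routine. Expanding (\ref{lab}) twice and using associativity of $\circledast$ yields for each cyclic term
\[
\mathit{L}_{\varepsilon\epsilon}\left[a,\mathit{L}_{\varepsilon\epsilon}\left[b,c\right]\right]=a\circledast b\circledast c-\Omega\left(a,b\right)\Omega\left(a,c\right)b\circledast c\circledast a-\Omega\left(b,c\right)a\circledast c\circledast b+\Omega\left(a,b\right)\Omega\left(a,c\right)\Omega\left(b,c\right)c\circledast b\circledast a,
\]
and likewise for its two cyclic images. Multiplying each expression by its cyclic prefactor and summing, every one of the six associative monomials $a\circledast b\circledast c$, $b\circledast c\circledast a$, $c\circledast a\circledast b$, $a\circledast c\circledast b$, $b\circledast a\circledast c$, $c\circledast b\circledast a$ occurs exactly twice; invoking $\Omega\left(x,y\right)\Omega\left(y,x\right)=1$ from (\ref{e1}) to kill the crossed factors makes the two coefficients opposite, so all six pairs cancel and (\ref{jac}) follows.

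The only genuine obstacle is the bookkeeping of the commutation factors, and in particular the precise pairing carried by the cyclic prefactor. For the coefficients of, say, $a\circledast b\circledast c$ to cancel, the prefactor of $\mathit{L}_{\varepsilon\epsilon}\left[a,\mathit{L}_{\varepsilon\epsilon}\left[b,c\right]\right]$ must be the factor pairing the \emph{outer} argument $a$ with the \emph{innermost} one $c$, namely $\varepsilon_{\pi\mu}^{\left(2\right)}\left(c^{\prime},a^{\prime}\right)\mathbf{\epsilon}_{\mu}^{\left(2\right)}\left(c,a\right)=\Omega\left(c,a\right)$, so that the reduction $\Omega\left(c,a\right)\Omega\left(a,c\right)=1$ becomes available — this is exactly the pairing of the crisp $\varepsilon$-Lie Jacobi convention. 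With that choice the six pairwise cancellations go through verbatim, the two-term shape of the bracket guaranteed by the $\varepsilon\epsilon$-skew-commutativity (\ref{leb}) being precisely what makes the partners match.
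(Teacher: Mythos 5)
There is no proof in the paper to compare yours against: the proposition is stated bare and is immediately followed by the definition of a double $\varepsilon\epsilon$-Lie algebra. Your argument therefore fills a real gap, and it is the canonical one: fusing the factors into $\Omega\left(a,b\right)=\varepsilon_{\pi\mu}^{\left(2\right)}\left(a^{\prime},b^{\prime}\right)\mathbf{\epsilon}_{\mu}^{\left(2\right)}\left(a,b\right)$, observing that (\ref{em1})--(\ref{em3}) say exactly that $\Omega$ obeys the commutation-factor axioms (\ref{e1})--(\ref{e3}) relative to $\circledast$, and then running Scheunert's six-monomial cancellation \cite{sch79} is precisely the right transplant, and your expansion of $\mathit{L}_{\varepsilon\epsilon}\left[a,\mathit{L}_{\varepsilon\epsilon}\left[b,c\right]\right]$ and the pairwise cancellations check out. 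One refinement: to see that $\Omega\left(a,\mathit{L}_{\varepsilon\epsilon}\left[b,c\right]\right)$ is well defined you do not need (\ref{pm}) and (\ref{mka}) (invoking the commutation relation for all elements would force $\mathit{L}_{\varepsilon\epsilon}\equiv0$ and trivialize everything); it is cleaner to note that (\ref{em2}), written once for $\left(b,c\right)$ and once for $\left(c,b\right)$, has the same left-hand side because $b^{\prime}+c^{\prime}=c^{\prime}+b^{\prime}$, which already forces $\mu\left(b\circledast c\right)=\mu\left(c\circledast b\right)$.

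Your closing paragraph, however, should be stated as a correction of the proposition rather than as a choice of convention. With the prefactor literally as printed in (\ref{jac}) --- $\Omega\left(a,b\right)$ on $\mathit{L}_{\varepsilon\epsilon}\left[a,\mathit{L}_{\varepsilon\epsilon}\left[b,c\right]\right]$, extended cyclically --- the identity is \emph{not} a consequence of associativity plus (\ref{em1})--(\ref{em3}), which is all a nonvacuous proof may use. Take the crisp instance $\mathbf{\epsilon}_{\mu}^{\left(2\right)}\equiv1$, $G=\mathbb{Z}_{2}$, $\varepsilon_{\pi\mu}^{\left(2\right)}\left(a^{\prime},b^{\prime}\right)=\left(-1\right)^{a^{\prime}b^{\prime}}$, with $\circledast$ the product of $\operatorname{End}\left(\Bbbk^{1|2}\right)$: for $a,b$ odd and $c$ even, subtracting the true identity from the printed cyclic sum leaves $2\mathit{L}_{\varepsilon\epsilon}\left[c,\mathit{L}_{\varepsilon\epsilon}\left[a,b\right]\right]$, which is nonzero for the matrix units $a=E_{12}$, $b=E_{21}$, $c=E_{23}$ (first basis vector even), since there $\mathit{L}_{\varepsilon\epsilon}\left[a,b\right]=E_{11}+E_{22}$ and $\mathit{L}_{\varepsilon\epsilon}\left[c,E_{11}+E_{22}\right]=-E_{23}$. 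The factor that makes the cyclic sum vanish is the one your computation actually uses, $\Omega\left(c,a\right)$ in front of $\mathit{L}_{\varepsilon\epsilon}\left[a,\mathit{L}_{\varepsilon\epsilon}\left[b,c\right]\right]$ --- equivalently, the printed factor $\Omega\left(a,b\right)$ must sit in front of $\mathit{L}_{\varepsilon\epsilon}\left[b,\mathit{L}_{\varepsilon\epsilon}\left[c,a\right]\right]$. So your write-up proves the corrected statement; as printed, (\ref{jac}) is true only in the vacuous reading in which (\ref{ab1}) is imposed on all of $\mathfrak{A}^{\left(\mu\right)}$, whence every bracket is zero and any prefactor whatsoever would do.
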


\begin{definition}
A \textit{double }$\varepsilon\epsilon$\textit{-Lie algebra} is an obscure
$G$-graded algebra with the double $\varepsilon\epsilon$-Lie bracket
(satisfying $\varepsilon\epsilon$-skew commutativity (\ref{leb}) and the
membership deformed Jacobi identity (\ref{jac})) as a multiplication, that
is\textit{ }$\mathcal{A}_{\mathcal{G}L}\left(  \mu\right)  =\left\langle
\mathfrak{A}^{\left(  \mu\right)  }\mid\mathit{L}_{\varepsilon\epsilon
},+\right\rangle $.
\end{definition}

\section{\label{sec-proj}\textsc{Projective representations}}

To generalize the $\varepsilon$-commutative algebras to the $n$-ary case, we
need to introduce $n$-ary projective representations and study them in brief.

\subsection{Binary projective representations}

First, briefly recall some general properties of the Schur factors and
corresponding commutation factors in the projective representation theory of
Abelian groups \cite{zmu60,zmu72} (see, also, \cite{fru55} and the unitary ray
representations \cite{bar54}). We show some known details in our notation
which can be useful in further extensions of the well-known binary
constructions to the $n$-ary case.

Let $\mathcal{H}^{\left(  2\right)  }=\left\langle H\mid\dotplus\right\rangle
\ $be a binary Abelian group and $f:\mathcal{H}^{\left(  2\right)
}\rightarrow\mathcal{E}^{\left(  2\right)  }$ , where $\mathcal{E}^{\left(
2\right)  }=\left\langle \operatorname*{End}V\mid\circ\right\rangle $, and $V$
is a vector space over a field $\Bbbk$. A map $f$ is a (binary)
\textit{projective representation} ($\sigma$-\textit{representation}
\cite{zmu60}), if $f\left(  x_{1}\right)  \circ f\left(  x_{2}\right)
=\pi_{0}^{\left(  2\right)  }\left(  x_{1},x_{2}\right)  f\left(
x_{1}\dotplus x_{2}\right)  $, $x_{1},x_{2}\in H$, and $\pi_{0}^{\left(
2\right)  }:\mathcal{H}^{\left(  2\right)  }\times\mathcal{H}^{\left(
2\right)  }\rightarrow\Bbbk^{\times}$ is a (Schur) factor, while $\left(
\circ\right)  $ is a (noncommutative binary) product in $\operatorname*{End}%
V$). The \textquotedblleft associativity relation\textquotedblright\ of
factors follows immediately from the associativity of $\left(  \circ\right)  $
such that (cf. (\ref{sa}))%
\begin{equation}
\pi_{0}^{\left(  2\right)  }\left(  x_{1},x_{2}\dotplus x_{3}\right)  \pi
_{0}^{\left(  2\right)  }\left(  x_{2},x_{3}\right)  =\pi_{0}^{\left(
2\right)  }\left(  x_{1},x_{2}\right)  \pi_{0}^{\left(  2\right)  }\left(
x_{1}\dotplus x_{2},x_{3}\right)  ,\ \ \ \forall x_{1},x_{2},x_{3}\in
H.\label{ss}%
\end{equation}

Two factor systems are equivalent $\left\{  \pi_{0}^{\left(  2\right)
}\right\}  \overset{\lambda}{\sim}\left\{  \tilde{\pi}_{0}^{\left(  2\right)
}\right\}  $ (or associated \cite{zmu72}), if there exists $\lambda
:\mathcal{H}^{\left(  2\right)  }\rightarrow\Bbbk^{\times}$ such that%
\begin{equation}
\tilde{\pi}_{0}^{\left(  2\right)  }\left(  x_{1},x_{2}\right)  =\frac
{\lambda\left(  x_{1}\right)  \lambda\left(  x_{2}\right)  }{\lambda\left(
x_{1}\dotplus x_{2}\right)  }\pi_{0}^{\left(  2\right)  }\left(  x_{1}%
,x_{2}\right)  ,\ \ \ \ \ \ \forall x_{1},x_{2}\in H,
\end{equation}
and the $\tilde{\pi}_{0}^{\left(  2\right)  }$-representation is given by
$\tilde{f}\left(  x\right)  =\lambda\left(  x\right)  f\left(  x\right)  $,
$x\in H$. The cocycle condition (\ref{ss}) means that $\pi_{0}^{\left(
2\right)  }$ belongs to the group $Z^{2}\left(  \mathcal{H}^{\left(  2\right)
},\Bbbk^{\times}\right)  $ of 2-cocycles of $\mathcal{H}^{\left(  2\right)  }$
over $\Bbbk^{\times}$, the quotient $\left\{  \pi_{0}^{\left(  2\right)
}\right\}  \diagup\overset{\lambda}{\sim}$ gives the corresponding multiplier
group, if $\Bbbk=\mathbb{C}$, and, in the general case, coincides with the
exponents of the 2-cohomology classes $H^{2}\left(  \mathcal{H}^{\left(
2\right)  },\Bbbk\right)  $ (for details, see, e.g.,
\cite{zmu60,zmu72,gou/mas/wal}).

The group $\mathcal{H}^{\left(  2\right)  }$ is Abelian, and therefore we have
(cf. (\ref{sab}))%
\begin{equation}
\pi_{0}^{\left(  2\right)  }\left(  x_{2},x_{1}\right)  f\left(  x_{1}\right)
\circ f\left(  x_{2}\right)  =\pi_{0}^{\left(  2\right)  }\left(  x_{2}%
,x_{1}\right)  \pi_{0}^{\left(  2\right)  }\left(  x_{1},x_{2}\right)
f\left(  x_{1}\dotplus x_{2}\right)  =\pi_{0}^{\left(  2\right)  }\left(
x_{1},x_{2}\right)  f\left(  x_{2}\right)  \circ f\left(  x_{1}\right)  ,
\label{sf}%
\end{equation}
which allows us to introduce a (binary) \textit{commutation factor}
$\varepsilon_{\pi_{0}}^{\left(  2\right)  }:\mathcal{H}^{\left(  2\right)
}\times\mathcal{H}^{\left(  2\right)  }\rightarrow\Bbbk^{\times}$ by (see
(\ref{ab}), (\ref{es}))%
\begin{align}
\varepsilon_{\pi_{0}}^{\left(  2\right)  }\left(  x_{1},x_{2}\right)   &
=\frac{\pi_{0}^{\left(  2\right)  }\left(  x_{1},x_{2}\right)  }{\pi
_{0}^{\left(  2\right)  }\left(  x_{2},x_{1}\right)  },\label{es1}\\
f\left(  x_{1}\right)  \circ f\left(  x_{2}\right)   &  =\varepsilon_{\pi_{0}%
}^{\left(  2\right)  }\left(  x_{1},x_{2}\right)  f\left(  x_{2}\right)  \circ
f\left(  x_{1}\right)  ,\ \forall x_{1},x_{2}\in H, \label{ff}%
\end{align}
with the obvious \textquotedblleft normalization\textquotedblright\ (cf.
(\ref{e1}))%
\begin{equation}
\varepsilon_{\pi_{0}}^{\left(  2\right)  }\left(  x_{1},x_{2}\right)
\varepsilon_{\pi_{0}}^{\left(  2\right)  }\left(  x_{2},x_{1}\right)
=1,\ \ \ \varepsilon_{\pi_{0}}^{\left(  2\right)  }\left(  x,x\right)
=1,\ \forall x_{1},x_{2},x\in H, \label{ee1}%
\end{equation}
which will be important in the $n$-ary case below.

The multiplication of the commutation factors follows from different
permutations of the three terms
\begin{align}
&  f\left(  y\right)  \circ f\left(  x_{1}\right)  \circ f\left(
x_{2}\right)  =\varepsilon_{\pi_{0}}^{\left(  2\right)  }\left(
y,x_{1}\right)  f\left(  x_{1}\right)  \circ f\left(  y\right)  \circ f\left(
x_{2}\right) \nonumber\\
&  =\varepsilon_{\pi_{0}}^{\left(  2\right)  }\left(  y,x_{1}\right)
\varepsilon_{\pi_{0}}^{\left(  2\right)  }\left(  y,x_{2}\right)  f\left(
x_{1}\right)  \circ f\left(  x_{2}\right)  \circ f\left(  y\right) \nonumber\\
&  =\pi_{0}^{\left(  2\right)  }\left(  x_{1},x_{2}\right)  f\left(  y\right)
\circ f\left(  x_{1}\dotplus x_{2}\right)  =\varepsilon_{\pi_{0}}^{\left(
2\right)  }\left(  y,x_{1}\dotplus x_{2}\right)  \left(  \pi_{0}^{\left(
2\right)  }\left(  x_{1},x_{2}\right)  f\left(  x_{1}\dotplus x_{2}\right)
\right)  \circ f\left(  y\right) \nonumber\\
&  =\varepsilon_{\pi_{0}}^{\left(  2\right)  }\left(  y,x_{1}\dotplus
x_{2}\right)  f\left(  x_{1}\right)  \circ f\left(  x_{2}\right)  \circ
f\left(  y\right)  ,\ \ \ \forall x_{1},x_{2},y\in H. \label{fy}%
\end{align}

Thus, it follows that the commutation factor multiplication is (and similarly
for the second place, cf. (\ref{e2})-(\ref{e3}))%
\begin{equation}
\varepsilon_{\pi_{0}}^{\left(  2\right)  }\left(  y,x_{1}\dotplus
x_{2}\right)  =\varepsilon_{\pi_{0}}^{\left(  2\right)  }\left(
y,x_{1}\right)  \varepsilon_{\pi_{0}}^{\left(  2\right)  }\left(
y,x_{2}\right)  ,\ \ \ \forall x_{1},x_{2},y\in H, \label{eee}%
\end{equation}
which means that $\varepsilon_{\pi_{0}}^{\left(  2\right)  }$ is a (binary)
bicharacter on $\mathcal{H}^{\left(  2\right)  }$, because for $\chi
_{y}^{\left(  2\right)  }\left(  x\right)  \equiv\varepsilon_{\pi_{0}%
}^{\left(  2\right)  }\left(  y,x\right)  $ we have%
\begin{equation}
\chi_{y}^{\left(  2\right)  }\left(  x_{1}\right)  \chi_{y}^{\left(  2\right)
}\left(  x_{2}\right)  =\chi_{y}^{\left(  2\right)  }\left(  x_{1}\dotplus
x_{2}\right)  ,\ \ \forall x_{1},x_{2}\in H. \label{xx}%
\end{equation}

Denoting the group of bicharacters $\chi_{y}^{\left(  2\right)  }$ on
$\mathcal{H}^{\left(  2\right)  }$ with the multiplication (\ref{xx}) by
$\mathcal{B}^{\left(  2\right)  }(\mathcal{H}^{\left(  2\right)  },\Bbbk)$, we
observe that the mapping $\pi_{0}^{\left(  2\right)  }\rightarrow
\varepsilon_{\pi_{0}}^{\left(  2\right)  }$ is a homomorphism of $Z^{2}\left(
\mathcal{H}^{\left(  2\right)  },\Bbbk^{\times}\right)  $ to $\mathcal{B}%
^{\left(  2\right)  }(\mathcal{H}^{\left(  2\right)  },\Bbbk)$ with the kernel
a subgroup of the 2-coboundaries of $\mathcal{H}^{\left(  2\right)  }$ over
$\Bbbk^{\times}$ (for more details, see \cite{zmu72}).

\subsection{$n$-ary projective representations}

Here we consider some features of $n$-ary projective representations and
corresponding particular generalizations of binary $\varepsilon$-commutativity.

Let $\mathcal{H}^{\left(  n\right)  }=\left\langle H\mid\left[  \ \ \right]
_{\dotplus}^{\left(  n\right)  }\right\rangle \ $ be an $n$-ary Abelian group
with the totally commutative multiplication $\left[  \ \ \right]  _{\dotplus
}^{\left(  n\right)  }$, and the mapping $f:\mathcal{H}^{\left(  n\right)
}\rightarrow\mathcal{E}^{\left(  n\right)  }$ , where $\mathcal{E}^{\left(
n\right)  }=\left\langle \operatorname*{End}V\mid\left[  \ \ \right]  _{\circ
}^{\left(  n\right)  }\right\rangle $ (for general polyadic representations,
see \cite{dup2018a} and refs therein). Here we suppose that $V$ is a vector
space over a field $\Bbbk$, and $\left[  \ \ \right]  _{\circ}^{\left(
n\right)  }$ is an $n$-ary associative product in $\operatorname*{End}V$,
which means that%
\begin{align}
&  f\left[  \left[  f\left(  x_{1}\right)  ,\ldots,f\left(  x_{n}\right)
\right]  _{\circ}^{\left(  n\right)  },f\left(  x_{n+1}\right)  ,\ldots
,f\left(  x_{2n-1}\right)  \right]  _{\circ}^{\left(  n\right)  }\nonumber\\
&  =f\left[  f\left(  x_{1}\right)  ,\left[  f\left(  x_{2}\right)
,\ldots,f\left(  x_{n+1}\right)  \right]  _{\circ}^{\left(  n\right)
},f\left(  x_{n+2}\right)  ,\ldots,f\left(  x_{2n-1}\right)  \right]  _{\circ
}^{\left(  n\right)  }\nonumber\\
&  \vdots\nonumber\\
&  =f\left[  f\left(  x_{1}\right)  ,f\left(  x_{2}\right)  ,\ldots,f\left(
x_{n-1}\right)  ,\left[  f\left(  x_{n}\right)  ,\ldots,f\left(
x_{2n-1}\right)  \right]  _{\circ}^{\left(  n\right)  }\right]  _{\circ
}^{\left(  n\right)  }\label{f}%
\end{align}

\begin{definition}
A map $f$ is an $n$-\textit{ary} \textit{projective representation}, if%
\begin{equation}
\left[  f\left(  x_{1}\right)  ,\ldots,f\left(  x_{n}\right)  \right]
_{\circ}^{\left(  n\right)  }=\pi_{0}^{\left(  n\right)  }\left(  x_{1}%
,\ldots,x_{n}\right)  f\left(  \left[  x_{1},\ldots,x_{n}\right]  _{\dotplus
}^{\left(  n\right)  }\right)  ,\ \ \ x_{1},\ldots,x_{n}\in H, \label{fn}%
\end{equation}
and $\pi_{0}^{\left(  n\right)  }:\overset{n}{\overbrace{\mathcal{H}^{\left(
n\right)  }\times\ldots\times\mathcal{H}^{\left(  n\right)  }}}\rightarrow
\Bbbk^{\times}$ is an $n$-\textit{ary (Schur-like) factor}.
\end{definition}

\begin{proposition}
The factors $\pi_{0}^{\left(  n\right)  }$ satisfy the $n$-ary $2$-cocycle
conditions (cf. (\ref{ss}))%
\begin{align}
&  \pi_{0}^{\left(  n\right)  }\left(  x_{1},\ldots,x_{n}\right)  \pi
_{0}^{\left(  n\right)  }\left(  \left[  x_{1},\ldots,x_{n}\right]
_{\dotplus}^{\left(  n\right)  },x_{n+1},\ldots,x_{2n-1}\right) \nonumber\\
&  =\pi_{0}^{\left(  n\right)  }\left(  x_{2},\ldots,x_{n+1}\right)  \pi
_{0}^{\left(  n\right)  }\left(  x_{1},\left[  x_{2},\ldots,x_{n+1}\right]
_{\dotplus}^{\left(  n\right)  },x_{n+2},\ldots,x_{2n-1}\right) \nonumber\\
&  \vdots\nonumber\\
&  =\pi_{0}^{\left(  n\right)  }\left(  x_{n},\ldots,x_{n+1}\right)  \pi
_{0}^{\left(  n\right)  }\left(  x_{1},\ldots,x_{n-1},\left[  x_{n}%
,\ldots,x_{2n-1}\right]  _{\dotplus}^{\left(  n\right)  }\right)
,\ \ x_{1},\ldots,x_{2n-1}\in H. \label{pp}%
\end{align}

\end{proposition}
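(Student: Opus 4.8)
The plan is to reproduce, in the polyadic setting, exactly the mechanism that yields the binary cocycle identity \eqref{ss}: I would compute one and the same iterated $\circ$-product of the $2n-1$ operators $f\left(x_1\right),\ldots,f\left(x_{2n-1}\right)$ in the $n$ distinct ways permitted by the $n$-ary associativity \eqref{f}, and then compare the resulting scalar prefactors. Concretely, for each $k=1,\ldots,n$ the associativity law \eqref{f} lets me place the inner bracket so that the first reduction acts on the block $f\left(x_k\right),\ldots,f\left(x_{k+n-1}\right)$ (these are $n$ consecutive arguments, and the range $k=1,\ldots,n$ gives precisely the $n$ admissible positions); call the resulting operator $P_k\in\operatorname{End}V$. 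Since \eqref{f} asserts that all these bracketings coincide, we have $P_1=P_2=\cdots=P_n$ as a single element of $\operatorname{End}V$.

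Next I would evaluate each $P_k$ by applying the defining relation \eqref{fn} twice. The inner $n$-ary $\circ$-bracket on $f\left(x_k\right),\ldots,f\left(x_{k+n-1}\right)$ produces the factor $\pi_0^{\left(n\right)}\left(x_k,\ldots,x_{k+n-1}\right)$ and collapses the block to $f\left(\left[x_k,\ldots,x_{k+n-1}\right]_{\dotplus}^{\left(n\right)}\right)$; the remaining outer $n$-ary $\circ$-bracket then produces a second factor $\pi_0^{\left(n\right)}\left(x_1,\ldots,x_{k-1},\left[x_k,\ldots,x_{k+n-1}\right]_{\dotplus}^{\left(n\right)},x_{k+n},\ldots,x_{2n-1}\right)$ (whose $n$ arguments are correctly accounted for, since $(k-1)+1+(n-k)=n$) and collapses everything to $f$ evaluated at the doubly iterated $\dotplus$-product. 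Because $\mathcal{H}^{\left(n\right)}$ is an $n$-ary group, its multiplication $\left[\ \ \right]_{\dotplus}^{\left(n\right)}$ is itself $n$-ary associative, so this doubly iterated $\dotplus$-product is independent of $k$ and equals the single value $\left[x_1,\ldots,x_{2n-1}\right]_{\dotplus}^{\left(n\right)}$. Hence each $P_k$ equals
\[
\pi_0^{\left(n\right)}\left(x_k,\ldots,x_{k+n-1}\right)\,\pi_0^{\left(n\right)}\left(x_1,\ldots,x_{k-1},\left[x_k,\ldots,x_{k+n-1}\right]_{\dotplus}^{\left(n\right)},x_{k+n},\ldots,x_{2n-1}\right)\,f\left(\left[x_1,\ldots,x_{2n-1}\right]_{\dotplus}^{\left(n\right)}\right).
\]

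Finally, equating $P_1=\cdots=P_n$ and cancelling the common operator $f\left(\left[x_1,\ldots,x_{2n-1}\right]_{\dotplus}^{\left(n\right)}\right)$ — which is legitimate since the factors $\pi_0^{\left(n\right)}$ take values in $\Bbbk^{\times}$ and a nondegenerate representation sends group elements to nonzero operators — yields precisely the chain of equalities \eqref{pp}, the $k$-th line recording the prefactor of $P_k$. I expect the only genuinely delicate point to be exactly this cancellation: the scalar prefactors can be stripped off only when the common value $f\left(\left[x_1,\ldots,x_{2n-1}\right]_{\dotplus}^{\left(n\right)}\right)$ is nonzero, the polyadic analogue of the tacit nondegeneracy already used in the binary derivation of \eqref{ss}. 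The remaining work is purely the index bookkeeping needed to verify that the $n$ admissible positions of the inner bracket in \eqref{f} reproduce, line by line, the $n$ expressions displayed in \eqref{pp}.
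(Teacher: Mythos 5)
Your proposal is correct and follows exactly the route the paper intends: its entire proof is the one-line remark that the identities ``immediately follow from the $n$-ary associativity in $\operatorname{End}V$ (\ref{f}) and (\ref{fn})'', and your argument is precisely the expansion of that remark — evaluate the $n$ admissible bracketings $P_{1},\ldots,P_{n}$ of the product of $2n-1$ operators via two applications of (\ref{fn}), use total ($n$-ary) associativity of $\left[\ \ \right]_{\dotplus}^{\left(n\right)}$ to see the common argument of $f$, and strip it off. Your explicit flagging of the cancellation step (nonvanishing of $f$ at the iterated $\dotplus$-product), which the paper leaves tacit here just as it does in the binary derivation of (\ref{ss}), is a welcome refinement, and your bookkeeping also fixes the evident typo in the last line of (\ref{pp}), where $\pi_{0}^{\left(n\right)}\left(x_{n},\ldots,x_{n+1}\right)$ should read $\pi_{0}^{\left(n\right)}\left(x_{n},\ldots,x_{2n-1}\right)$.
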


\begin{proof}
These immediately follow from the $n$-ary associativity in
$\operatorname*{End}V$ (\ref{f}) and (\ref{fn}).
\end{proof}

Two $n$-ary factor systems are equivalent $\left\{  \pi_{0}^{\left(  n\right)
}\right\}  \overset{\lambda}{\sim}\left\{  \tilde{\pi}_{0}^{\left(  n\right)
}\right\}  $, if there exists $\lambda:\mathcal{H}^{\left(  n\right)
}\rightarrow\Bbbk^{\times}$ such that%
\begin{equation}
\tilde{\pi}_{0}^{\left(  n\right)  }\left(  x_{1},\ldots,x_{n}\right)
=\frac{\lambda\left(  x_{1}\right)  \lambda\left(  x_{2}\right)  \ldots
\lambda\left(  x_{n}\right)  }{\lambda\left(  \left[  x_{1},\ldots
,x_{n}\right]  _{\dotplus}^{\left(  n\right)  }\right)  }\pi_{0}^{\left(
n\right)  }\left(  x_{1},\ldots,x_{n}\right)  ,\ \ \ \ \ \ \forall x_{1}%
,x_{2}\in H, \label{pn}%
\end{equation}
and the $\tilde{\pi}_{0}^{\left(  n\right)  }$-representation is given by
$\tilde{f}\left(  x\right)  =\lambda\left(  x\right)  f\left(  x\right)  $,
$x\in H$.

To understand how properly and uniquely to introduce the commutation factors
for $n$-ary projective representations, we need to consider an $n$-ary analog
of (\ref{sf}) (see, also (\ref{sab})).

\begin{proposition}
The commutativity of a $\pi_{0}^{\left(  n\right)  }$-representation is given
by $\left(  n!-1\right)  $ relations of the form%
\begin{equation}
\pi_{0}^{\left(  n\right)  }\left(  x_{\sigma\left(  1\right)  }%
,\ldots,x_{\sigma\left(  n\right)  }\right)  \left[  f\left(  x_{1}\right)
,\ldots,f\left(  x_{n}\right)  \right]  _{\circ}^{\left(  n\right)  }=\pi
_{0}^{\left(  n\right)  }\left(  x_{1},\ldots,x_{n}\right)  \left[  f\left(
x_{\sigma\left(  1\right)  }\right)  ,\ldots,f\left(  x_{\sigma\left(
n\right)  }\right)  \right]  _{\circ}^{\left(  n\right)  }, \label{pf}%
\end{equation}
where $\sigma\in S_{n}$, $\sigma\neq I$, $S_{n}$ is the symmetry permutation
group on $n$ elements.
\end{proposition}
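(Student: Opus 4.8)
The plan is to derive the relation (\ref{pf}) directly from the defining property (\ref{fn}) of an $n$-ary projective representation, using the total commutativity of the $n$-ary group product $\left[\ \right]_{\dotplus}^{\left(n\right)}$. First I would write down (\ref{fn}) for the natural ordering $\left(x_{1},\ldots,x_{n}\right)$, and then, separately, for the $\sigma$-permuted ordering $\left(x_{\sigma\left(1\right)},\ldots,x_{\sigma\left(n\right)}\right)$ with a fixed $\sigma\in S_{n}$. This produces two evaluations of the single map $f$, namely $f\left(\left[x_{1},\ldots,x_{n}\right]_{\dotplus}^{\left(n\right)}\right)$ and $f\left(\left[x_{\sigma\left(1\right)},\ldots,x_{\sigma\left(n\right)}\right]_{\dotplus}^{\left(n\right)}\right)$, each expressed through the corresponding Schur-like factor $\pi_{0}^{\left(n\right)}$ and the $n$-ary product $\left[\ \right]_{\circ}^{\left(n\right)}$ in $\operatorname{End}V$.

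The key step is to invoke the hypothesis that $\mathcal{H}^{\left(n\right)}$ is an Abelian (totally commutative) $n$-ary group, so that the two arguments of $f$ coincide, $\left[x_{1},\ldots,x_{n}\right]_{\dotplus}^{\left(n\right)}=\left[x_{\sigma\left(1\right)},\ldots,x_{\sigma\left(n\right)}\right]_{\dotplus}^{\left(n\right)}$. Consequently the two values of $f$ are equal, and solving (\ref{fn}) for $f$ of this common element in both orderings gives
\begin{equation}
\frac{1}{\pi_{0}^{\left(n\right)}\left(x_{1},\ldots,x_{n}\right)}\left[f\left(x_{1}\right),\ldots,f\left(x_{n}\right)\right]_{\circ}^{\left(n\right)}=\frac{1}{\pi_{0}^{\left(n\right)}\left(x_{\sigma\left(1\right)},\ldots,x_{\sigma\left(n\right)}\right)}\left[f\left(x_{\sigma\left(1\right)}\right),\ldots,f\left(x_{\sigma\left(n\right)}\right)\right]_{\circ}^{\left(n\right)}.
\end{equation}
Since $\pi_{0}^{\left(n\right)}$ takes values in $\Bbbk^{\times}$ and is therefore nonvanishing, I can clear both denominators by cross-multiplication, which yields precisely the asserted form (\ref{pf}). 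This is the exact $n$-ary analogue of the binary derivation (\ref{sf})--(\ref{sab}), where a single commutation factor $\varepsilon^{\left(2\right)}$ sufficed.

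Finally, for the count I would note that $\left|S_{n}\right|=n!$, and that the choice $\sigma=I$ collapses (\ref{pf}) into a tautology, carrying no information; discarding it leaves exactly $n!-1$ nontrivial relations, one for each $\sigma\neq I$, as claimed. I do not expect a genuine obstacle here, since the argument is a direct consequence of (\ref{fn}) together with total commutativity of $\left[\ \right]_{\dotplus}^{\left(n\right)}$. The only points deserving care are the explicit use of that total commutativity — which is what forces the full symmetric group to enter, in contrast to the binary case — and the bookkeeping in the counting, namely verifying that the identity permutation is the unique one producing a trivial relation, so that the number of independent constraints is $n!-1$ rather than fewer.
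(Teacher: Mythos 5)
Your proof is correct and follows essentially the same route as the paper: both derivations rest on applying the defining relation (\ref{fn}) to each ordering, invoking total commutativity (\ref{fcom}) of $\left[\ \right]_{\dotplus}^{\left(n\right)}$ to identify the two values of $f$, and using the nonvanishing of $\pi_{0}^{\left(n\right)}$ to obtain (\ref{pf}), with the same $\left(n!-1\right)$ count over nonidentity permutations. The only difference is presentational: the paper expands both sides of (\ref{pf}) and checks they coincide, while you solve (\ref{fn}) for $f$ of the common element and cross-multiply, which is an equivalent rearrangement.
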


\begin{proof}
Using the definition of the $n$-ary projective representation (\ref{fn}), we
obtain for the l.h.s. and r.h.s. of (\ref{pf})%
\begin{equation}
\pi_{0}^{\left(  n\right)  }\left(  x_{\sigma\left(  1\right)  }%
,\ldots,x_{\sigma\left(  n\right)  }\right)  \pi_{0}^{\left(  n\right)
}\left(  x_{1},\ldots,x_{n}\right)  f\left(  \left[  x_{1},\ldots
,x_{n}\right]  _{\dotplus}^{\left(  n\right)  }\right)
\end{equation}
and%
\begin{equation}
\pi_{0}^{\left(  n\right)  }\left(  x_{1},\ldots,x_{n}\right)  \pi
_{0}^{\left(  n\right)  }\left(  x_{\sigma\left(  1\right)  },\ldots
,x_{\sigma\left(  n\right)  }\right)  f\left(  \left[  x_{\sigma\left(
1\right)  },\ldots,x_{\sigma\left(  n\right)  }\right]  _{\dotplus}^{\left(
n\right)  }\right)  ,
\end{equation}
respectively.

Because $\mathcal{H}^{\left(  n\right)  }$ is totally commutative
\begin{equation}
\left[  x_{1},\ldots,x_{n}\right]  _{\dotplus}^{\left(  n\right)  }=\left[
x_{\sigma\left(  1\right)  },\ldots,x_{\sigma\left(  n\right)  }\right]
_{\dotplus}^{\left(  n\right)  },\ \ \forall x_{1},\ldots,x_{n}\in
H,\forall\sigma\in S_{n}.\label{fcom}%
\end{equation}
then $f\left(  \left[  x_{1},\ldots,x_{n}\right]  _{\dotplus}^{\left(
n\right)  }\right)  =f\left(  \left[  x_{\sigma\left(  1\right)  }%
,\ldots,x_{\sigma\left(  n\right)  }\right]  _{\dotplus}^{\left(  n\right)
}\right)  $. Taking into account all nonidentical permutations, we get
$\left(  n!-1\right)  $ relations in (\ref{pf}).
\end{proof}

\begin{corollary}
To describe the noncommutativity of an $n$-ary projective representation, we
need to have \textsf{not one} relation between Schur factors (as in the binary
case (\ref{sf}) and (\ref{sab})), but $\left(  n!-1\right)  $
\textsf{relations} (\ref{pf}). This leads to the concept of the set of
$\left(  n!-1\right)  $ commutation factors.
\end{corollary}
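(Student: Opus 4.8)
The plan is to read this corollary as the natural bookkeeping consequence of the preceding Proposition, so that its entire content reduces to (i) counting the non-identity permutations occurring in (\ref{pf}) and (ii) converting each of the resulting relations into its own commutation factor, exactly as the single relation (\ref{sf})/(\ref{sab}) was converted into the binary commutation factor (\ref{es1}). I would open by isolating the binary case as the specialization $n=2$: there $S_n=S_2=\{I,(12)\}$, so $n!-1=1$, the unique transposition yields the single relation (\ref{sf}), and this produces exactly one factor $\varepsilon_{\pi_0}^{(2)}$. For general $n$ the preceding Proposition already supplies one relation of the form (\ref{pf}) for each $\sigma\in S_n$ with $\sigma\neq I$; since $|S_n|=n!$, there are precisely $n!-1$ of them, which is the count asserted.

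Next I would attach a commutation factor to each relation. Writing $\vec{x}=(x_1,\ldots,x_n)$, $\sigma\vec{x}=(x_{\sigma(1)},\ldots,x_{\sigma(n)})$ and $F(\vec{x})=[f(x_1),\ldots,f(x_n)]_\circ^{(n)}$, I set for every $\sigma\in S_n\setminus\{I\}$
\[
\varepsilon_{\pi_0,\sigma}^{(n)}(x_1,\ldots,x_n)=\frac{\pi_0^{(n)}(x_1,\ldots,x_n)}{\pi_0^{(n)}(x_{\sigma(1)},\ldots,x_{\sigma(n)})},
\]
generalizing (\ref{es1}); this is well defined since $\pi_0^{(n)}$ is $\Bbbk^{\times}$-valued. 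Relation (\ref{pf}) then becomes $F(\vec{x})=\varepsilon_{\pi_0,\sigma}^{(n)}(\vec{x})\,F(\sigma\vec{x})$, and from total commutativity (\ref{fcom}) one immediately reads off the normalization $\varepsilon_{\pi_0,I}^{(n)}=1$ together with $\varepsilon_{\pi_0,\sigma}^{(n)}(x,\ldots,x)=1$, mirroring (\ref{ee1}). This yields precisely the announced set $\{\varepsilon_{\pi_0,\sigma}^{(n)}\}_{\sigma\neq I}$ of $n!-1$ commutation factors.

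The substantive point, and the step I expect to be the real obstacle, is to justify the word \textquotedblleft need\textquotedblright: that, in contrast to the binary case, this set cannot in general be collapsed to a single factor. I would first establish the composition law by chaining permutations exactly as in the three-term binary computation (\ref{fy}): inserting $F(\alpha\vec{x})=\varepsilon_{\pi_0,\beta}^{(n)}(\alpha\vec{x})\,F((\alpha\beta)\vec{x})$ into $F(\vec{x})=\varepsilon_{\pi_0,\alpha}^{(n)}(\vec{x})\,F(\alpha\vec{x})$ gives the $1$-cocycle (crossed-homomorphism) relation
\[
\varepsilon_{\pi_0,\alpha\beta}^{(n)}(x_1,\ldots,x_n)=\varepsilon_{\pi_0,\alpha}^{(n)}(x_1,\ldots,x_n)\,\varepsilon_{\pi_0,\beta}^{(n)}(x_{\alpha(1)},\ldots,x_{\alpha(n)}),
\]
with $S_n$ acting on the arguments by permutation; specializing $\beta=\alpha^{-1}$ reproduces the inverse normalization $\varepsilon_{\pi_0,\sigma}^{(n)}(\vec{x})\,\varepsilon_{\pi_0,\sigma^{-1}}^{(n)}(\sigma\vec{x})=1$ as the $n$-ary analog of the first identity in (\ref{ee1}). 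This shows the family is a structured set — a $1$-cocycle on $S_n$ generated, via the relation above, by its values on a set of generating transpositions.

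The hard part will then be the genuine non-reducibility. For $n=2$ the group is exhausted by a single transposition, so one factor suffices; but for $n\geq 3$ the group $S_n$ is non-cyclic and requires at least $n-1$ transpositions to generate, so no single commutation factor can reproduce all of the relations (\ref{pf}). Moreover, because $\pi_0^{(n)}$ is a genuine $n$-place Schur-like factor and is not assembled from two-place data, the $n$-ary noncommutativity it encodes is not reducible to any binary-type relation. I would make this precise already at $n=3$ by exhibiting a factor $\pi_0^{(3)}$ for which the commutation factors attached to two independent transpositions are functionally independent, thereby forcing the full set of $n!-1$ relations to be retained and establishing the claimed \emph{set} of $n!-1$ commutation factors.
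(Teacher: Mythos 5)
Your first two paragraphs are exactly the paper's own reasoning, and they suffice. The corollary carries no separate proof in the paper: it is read off from the preceding Proposition, whose proof counts the non-identity permutations of $S_n$ (there are $n!-1$ of them, each yielding one relation (\ref{pf})), and the conversion of each relation into a commutation factor via the ratio $\pi_0^{\left(n\right)}\left(x_{1},\ldots,x_{n}\right)\diagup\pi_0^{\left(n\right)}\left(x_{\sigma\left(1\right)},\ldots,x_{\sigma\left(n\right)}\right)$ is precisely the paper's immediately following definition (\ref{ens})--(\ref{fef}), with the normalizations you record being the paper's $n$-ary analog of (\ref{ee1}). Your third paragraph's composition law is also correct (it is an identity of ratios of $\pi_0^{\left(n\right)}$) and is a harmless addition beyond what the paper states.

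The one criticism concerns your fourth paragraph. The corollary makes no minimality or irreducibility claim: \textquotedblleft need\textquotedblright\ here is pure bookkeeping --- commutation under all of $S_n$ involves one relation per non-identity permutation, hence $n!-1$ of them --- so nothing remains to be proved after your second paragraph, and what you call \textquotedblleft the real obstacle\textquotedblright\ is not part of the statement. Moreover, the independence assertion you propose to establish at $n=3$ would need a much more careful formulation: by definition every component of the commutation \textquotedblleft vector\textquotedblright\ is a ratio of values of the \emph{single} function $\pi_0^{\left(n\right)}$, so the $n!-1$ factors are never independent as data (your own cocycle law, which reduces everything to generating transpositions evaluated at permuted tuples, says the same thing in different words). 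What is true, and essentially trivial, is that generically $\pi_0^{\left(n\right)}\left(x_{\sigma\left(1\right)},\ldots,x_{\sigma\left(n\right)}\right)$ differs for different $\sigma$, so no single scalar function can serve uniformly in all $n!-1$ relations; the group-theoretic remark that $S_n$ requires $n-1$ generating transpositions does not by itself prove anything about the factors. None of this affects the validity of your argument for the corollary as stated; it only means the part you expected to be hard, and left as a sketch, is not needed.
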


\begin{definition}
The commutativity of the $n$-ary projective representation with the Schur-like
factor $\pi_{0}^{\left(  n\right)  }\left(  x_{1},\ldots,x_{n}\right)  $ is
governed by the \textit{set of} $\left(  n!-1\right)  $ \textit{commutation
factors}%
\begin{align}
\varepsilon_{\sigma\left(  1\right)  ,\ldots,\sigma\left(  n\right)
}^{\left(  n\right)  }  &  \equiv\varepsilon_{\sigma\left(  1\right)
,\ldots,\sigma\left(  n\right)  }^{\left(  n\right)  }\left(  x_{1}%
,\ldots,x_{n}\right)  =\frac{\pi_{0}^{\left(  n\right)  }\left(  x_{1}%
,\ldots,x_{n}\right)  }{\pi_{0}^{\left(  n\right)  }\left(  x_{\sigma\left(
1\right)  },\ldots,x_{\sigma\left(  n\right)  }\right)  },\label{ens}\\
\left[  f\left(  x_{1}\right)  ,\ldots,f\left(  x_{n}\right)  \right]
_{\circ}^{\left(  n\right)  }  &  =\varepsilon_{\sigma\left(  1\right)
,\ldots,\sigma\left(  n\right)  }^{\left(  n\right)  }\left[  f\left(
x_{\sigma\left(  1\right)  }\right)  ,\ldots,f\left(  x_{\sigma\left(
n\right)  }\right)  \right]  _{\circ}^{\left(  n\right)  }, \label{fef}%
\end{align}
where $\sigma\in S_{n}$. We call all $\varepsilon_{\sigma\left(  1\right)
,\ldots,\sigma\left(  n\right)  }^{\left(  n\right)  }$ as a
\textit{commutation factor }\textquotedblleft\textit{vector}\textquotedblright%
\ and denote it by $\vec{\varepsilon}$, where its components will be
enumerated by lexicographic order.
\end{definition}

Thus, each component of $\vec{\varepsilon}$ is responsible for the commutation
of any two $n$-ary monomials with different permutations $\sigma
,\sigma^{\prime}\in S_{n}$, since from (\ref{fef}) it follows that%
\begin{equation}
\left[  f\left(  x_{\sigma^{\prime}\left(  1\right)  }\right)  ,\ldots
,f\left(  x_{\sigma^{\prime}\left(  n\right)  }\right)  \right]  _{\circ
}^{\left(  n\right)  }=\varepsilon_{\sigma\left(  1\right)  ,\ldots
,\sigma\left(  n\right)  }^{\left(  n\right)  }\left(  x_{\sigma^{\prime
}\left(  1\right)  },\ldots,x_{\sigma^{\prime}\left(  n\right)  }\right)
\left[  f\left(  x_{\sigma\left(  1\right)  }\right)  ,\ldots,f\left(
x_{\sigma\left(  n\right)  }\right)  \right]  _{\circ}^{\left(  n\right)  },
\end{equation}

It follows from (\ref{ens}) that an $n$-ary analog of the normalization
property (\ref{ee1}) is%
\begin{align}
\varepsilon_{\sigma\left(  1\right)  ,\ldots,\sigma\left(  n\right)
}^{\left(  n\right)  }\left(  x_{\sigma^{\prime}\left(  1\right)  }%
,\ldots,x_{\sigma^{\prime}\left(  n\right)  }\right)  \varepsilon
_{\sigma^{\prime}\left(  1\right)  ,\ldots,\sigma^{\prime}\left(  n\right)
}^{\left(  n\right)  }\left(  x_{\sigma\left(  1\right)  },\ldots
,x_{\sigma\left(  n\right)  }\right)   &  =1,\\
\varepsilon_{1,\ldots,n}^{\left(  n\right)  }\left(  x,\ldots,x\right)   &
=1,
\end{align}
where $\sigma$,$\sigma^{\prime}\in S_{n}$, $x_{1},\ldots,x_{n},x\in H$.

In this notation the binary commutation factor (\ref{es1}) is $\varepsilon
_{\pi_{0}}^{\left(  2\right)  }\left(  x_{1},x_{2}\right)  =\varepsilon
_{21}^{\left(  2\right)  }\left(  x_{1},x_{2}\right)  $.

\begin{example}
[\textsf{Ternary projective representation}]Consider the minimal nonbinary
case $n=3$. The ternary projective representation of the Abelian ternary group
$\mathcal{H}^{\left(  3\right)  }$ is given by%
\begin{equation}
\left[  f\left(  x_{1}\right)  ,f\left(  x_{2}\right)  ,f\left(  x_{3}\right)
\right]  =\pi_{0}\left(  x_{1},x_{2},x_{3}\right)  f\left(  x_{1}\dotplus
x_{2}\dotplus x_{3}\right)  ,\ \ \ x_{1},x_{2},x_{3}\in H,
\end{equation}
where we denote $\pi_{0}^{\left(  3\right)  }\equiv\pi_{0}$, $\left[
\ \ \right]  _{\circ}^{\left(  3\right)  }\equiv\left[  \ \ \right]  $ and
$\left[  x_{1},x_{2},x_{3}\right]  _{\dotplus}^{\left(  3\right)  }\equiv
x_{1}\dotplus x_{2}\dotplus x_{3}$.

The ternary 2-cocycle conditions for the ternary Schur-like factor $\pi_{0}$
now become%
\begin{align}
&  \pi_{0}\left(  x_{1},x_{2},x_{3}\right)  \pi_{0}\left(  x_{1}\dotplus
x_{2}\dotplus x_{3},x_{4},x_{5}\right) \nonumber\\
&  =\pi_{0}\left(  x_{2},x_{3},x_{4}\right)  \pi_{0}\left(  x_{1}%
,x_{2}\dotplus x_{3}\dotplus x_{4},x_{5}\right) \nonumber\\
&  =\pi_{0}\left(  x_{3},x_{4},x_{5}\right)  \pi_{0}\left(  x_{1},x_{2}%
,x_{3}\dotplus x_{4}\dotplus x_{5}\right)  . \label{px0}%
\end{align}

Thus, we obtain $\left(  3!-1\right)  =5$ different ternary commutation
relations and the corresponding $5$-dimensional \textquotedblleft
vector\textquotedblright\ $\vec{\varepsilon}$%
\begin{align}
&  \left[  f\left(  x_{1}\right)  ,f\left(  x_{2}\right)  ,f\left(
x_{3}\right)  \right]  =\varepsilon_{\sigma\left(  1\right)  \sigma\left(
2\right)  \sigma\left(  3\right)  }\left[  f\left(  x_{\sigma\left(  1\right)
}\right)  ,f\left(  x_{\sigma\left(  2\right)  }\right)  ,f\left(
x_{\sigma\left(  3\right)  }\right)  \right]  ,\\
\varepsilon_{132}  &  =\frac{\pi_{0}\left(  x_{1},x_{2},x_{3}\right)  }%
{\pi_{0}\left(  x_{1},x_{3},x_{2}\right)  },\ \ \varepsilon_{231}=\frac
{\pi_{0}\left(  x_{1},x_{2},x_{3}\right)  }{\pi_{0}\left(  x_{2},x_{3}%
,x_{1}\right)  },\ \ \varepsilon_{213}=\frac{\pi_{0}\left(  x_{1},x_{2}%
,x_{3}\right)  }{\pi_{0}\left(  x_{2},x_{1},x_{3}\right)  },\nonumber\\
\varepsilon_{312}  &  =\frac{\pi_{0}\left(  x_{1},x_{2},x_{3}\right)  }%
{\pi_{0}\left(  x_{3},x_{1},x_{2}\right)  },\ \ \varepsilon_{321}=\frac
{\pi_{0}\left(  x_{1},x_{2},x_{3}\right)  }{\pi_{0}\left(  x_{3},x_{2}%
,x_{1}\right)  },\ \ x_{1},x_{2},x_{3}\in H,\ \sigma\in S_{3}. \label{ep0}%
\end{align}

\end{example}

\section{$n$\textsc{-ary double commutative algebras}}

\subsection{$n$-ary $\varepsilon$-commutative algebras}

Here we introduce grading noncommutativity for $n$-ary algebras
(\textquotedblleft$n$-ary coloring\textquotedblright), which is closest to the
binary (\textquotedblleft coloring\textquotedblright) case (\ref{ab}). We are
exploiting an $n$-ary analog of the (Schur) factor (\ref{es}) and its relation
(\ref{sab}) by means of the $n$-ary projective representation theory from
\textsc{Section} \ref{sec-proj}.

Let $\mathcal{A}^{\left(  n\right)  }=\left\langle A\mid\left[  \ \right]
^{\left(  n\right)  },+\right\rangle $ be an associative $n$-ary algebra
\cite{car4,mic/vin,oli60} over a binary field $\Bbbk$ (with the binary
addition) having zero $z\in A$ and unit $e\in A$, if $\mathcal{A}^{\left(
n\right)  }$ is unital (for polyadic algebras with all nonbinary operations,
see \cite{dup2019}). A $n$-ary \textit{graded algebra} $\mathcal{A}%
_{\mathcal{G}}^{\left(  n\right)  }$ (an $n$-ary $G$-graded $\Bbbk$-algebra)
is a direct sum of subalgebras $\mathcal{A}_{\mathcal{G}}^{\left(  n\right)
}=\bigoplus_{g\in G}\mathcal{A}_{g}$, where $\mathcal{G}=\left\langle
G\mid+^{\prime}\right\rangle $ is a (binary Abelian) \textit{grading group}
and the set $n$-ary multiplication \textquotedblleft respects the
gradation\textquotedblright%
\begin{equation}
\left[  A_{g_{1}},\ldots,A_{g_{n}}\right]  ^{\left(  n\right)  }\subseteq
A_{g_{1}+^{\prime}\ldots+^{\prime}g_{n}},\ \ \ \ g_{1},\ldots,g_{n}\in G.
\end{equation}

As in the binary case (\ref{i}) the elements from $A_{g}\subset A$ are
homogeneous of degree $a^{\prime}=g\in G$.

It is natural to start our $n$-ary consideration from the Schur-like factors
(\ref{pp}) which generalize (\ref{sa}) and (\ref{ss}).

\begin{definition}
In an $n$-ary graded algebra $\mathcal{A}_{\mathcal{G}}^{\left(  n\right)  }$
the $n$\textit{-ary Schur-like factor }is an $n$-place function on gradings
$\pi^{\left(  n\right)  }:\overset{n}{\overbrace{G\times\ldots\times G}%
}\rightarrow A$ satisfying the $n$-ary cocycle condition (cf. (\ref{sa}) and
(\ref{pp}))%
\begin{align}
&  \pi^{\left(  n\right)  }\left(  a_{1}^{\prime},\ldots,a_{n}^{\prime
}\right)  \pi^{\left(  n\right)  }\left(  \left[  a_{1}^{\prime},\ldots
,a_{n}^{\prime}\right]  ^{\left(  n\right)  },a_{n+1}^{\prime},\ldots
,a_{2n-1}^{\prime}\right) \nonumber\\
&  =\pi^{\left(  n\right)  }\left(  a_{2}^{\prime},\ldots,a_{n+1}^{\prime
}\right)  \pi^{\left(  n\right)  }\left(  a_{1}^{\prime},\left[  a_{2}%
^{\prime},\ldots,a_{n+1}^{\prime}\right]  ^{\left(  n\right)  },a_{n+2}%
^{\prime},\ldots,a_{2n-1}^{\prime}\right) \nonumber\\
&  \vdots\nonumber\\
&  =\pi^{\left(  n\right)  }\left(  a_{n}^{\prime},\ldots,a_{n+1}^{\prime
}\right)  \pi^{\left(  n\right)  }\left(  a_{1}^{\prime},\ldots,a_{n-1}%
^{\prime},\left[  a_{n}^{\prime},\ldots,a_{2n-1}^{\prime}\right]  ^{\left(
n\right)  }\right)  ,\ \ a_{1}^{\prime},\ldots,a_{2n-1}^{\prime}\in G.
\label{pa}%
\end{align}

\end{definition}

There are many possible ways to introduce noncommutativity for $n$-ary
algebras \cite{azc/izq,mic/vin}. We propose a \textquotedblleft projective
version\textquotedblright\ of noncommutativity in $\mathcal{A}_{\mathcal{G}%
}^{\left(  n\right)  }$ which naturally follows from the $n$-ary projective
representations (\ref{ens}) and can be formulated in terms of the Schur-like
factors as in (\ref{sab}) and (\ref{pf}).

\begin{definition}
An $n$-ary graded algebra $\mathcal{A}_{\mathcal{G}}^{\left(  n\right)  }$ is
called $\pi$\textit{-commutative}, if the $\left(  n!-1\right)  $ relations
(cf. (\ref{pf}))%
\begin{equation}
\pi^{\left(  n\right)  }\left(  a_{\sigma\left(  1\right)  }^{\prime}%
,\ldots,a_{\sigma\left(  n\right)  }^{\prime}\right)  \left[  a_{1}%
,\ldots,a_{n}\right]  ^{\left(  n\right)  }=\pi^{\left(  n\right)  }\left(
a_{1}^{\prime},\ldots,a_{n}^{\prime}\right)  \left[  a_{\sigma\left(
1\right)  },\ldots,a_{\sigma\left(  n\right)  }\right]  ^{\left(  n\right)  },
\label{paa}%
\end{equation}
hold for all $a_{1},\ldots,a_{n}\in A$, and $\sigma\in S_{n}$, $\sigma\neq I$,
where $\pi^{\left(  n\right)  }$ are $n$-ary Schur-like factors satisfying
(\ref{pa}).
\end{definition}

Two $n$-ary Schur-like factor systems are equivalent $\left\{  \pi^{\left(
n\right)  }\right\}  \overset{\tilde{\lambda}}{\sim}\left\{  \tilde{\pi
}^{\left(  n\right)  }\right\}  $, if there exists $\tilde{\lambda
}:\mathcal{A}_{\mathcal{G}}^{\left(  n\right)  }\rightarrow\Bbbk^{\times}$
such that (cf. (\ref{pn}))%
\begin{equation}
\tilde{\pi}^{\left(  n\right)  }\left(  a_{1}^{\prime},\ldots,a_{n}^{\prime
}\right)  =\frac{\tilde{\lambda}\left(  a_{1}^{\prime}\right)  \ldots
\tilde{\lambda}\left(  a_{n}^{\prime}\right)  }{\tilde{\lambda}\left(  \left[
a_{1}^{\prime},\ldots,a_{n}^{\prime}\right]  ^{\left(  n\right)  }\right)
}\pi^{\left(  n\right)  }\left(  a_{1}^{\prime},\ldots,a_{n}^{\prime}\right)
,\ \ \ \ \ \ \forall a_{1}^{\prime},\ldots,a_{2}^{\prime}\in G.
\end{equation}

The quotient by this equivalence relation $\left\{  \pi^{\left(  n\right)
}\right\}  \diagup\overset{\tilde{\lambda}}{\sim}$ is the corresponding
multiplier group, as in the binary case \cite{sch79}.

Let $\varphi\in\operatorname*{Aut}G$ and $\left\{  \pi^{\left(  n\right)
}\right\}  $ be a factor system, then its pullback%
\begin{equation}
\pi_{\ast}^{\left(  n\right)  }\left(  a_{1}^{\prime},\ldots,a_{n}^{\prime
}\right)  =\pi^{\left(  n\right)  }\left(  \varphi\left(  a_{1}^{\prime
}\right)  ,\ldots,\varphi\left(  a_{n}^{\prime}\right)  \right)  \label{ps}%
\end{equation}
is also a factor system $\left\{  \pi_{\ast}^{\left(  n\right)  }\right\}  $.
In the $n$-ary case the \textquotedblleft homotopic\textquotedblright\ analog
of (\ref{ps}) is possible%
\begin{equation}
\pi_{\ast\ast}^{\left(  n\right)  }\left(  a_{1}^{\prime},\ldots,a_{n}%
^{\prime}\right)  =\pi^{\left(  n\right)  }\left(  \varphi_{1}\left(
a_{1}^{\prime}\right)  ,\ldots,\varphi_{n}\left(  a_{n}^{\prime}\right)
\right)  , \label{pss}%
\end{equation}
where $\varphi_{1},\ldots,\varphi_{n}\in\operatorname*{Aut}G$, such that
$\left\{  \pi_{\ast\ast}^{\left(  n\right)  }\right\}  $ is also a factor system.

Comparing (\ref{paa}) with binary $\pi$-commutativity (\ref{sab}) we observe
that the most general description of $n$-ary graded algebras $\mathcal{A}%
_{\mathcal{G}}^{\left(  n\right)  }$ can be achieved by using at least
$\left(  n!-1\right)  $ commutation factors.

\begin{definition}
An $n$-ary graded algebra $\mathcal{A}_{\mathcal{G}}^{\left(  n\right)  }$ is
$\varepsilon^{\left(  n\right)  }$\textit{-commutative} if there are $\left(
n!-1\right)  $ commutation factors%
\begin{align}
\varepsilon_{\sigma\left(  1\right)  ,\ldots,\sigma\left(  n\right)
}^{\left(  n\right)  }  &  \equiv\varepsilon_{\sigma\left(  1\right)
,\ldots,\sigma\left(  n\right)  }^{\left(  n\right)  }\left(  a_{1}^{\prime
},\ldots,a_{n}^{\prime}\right)  =\frac{\pi^{\left(  n\right)  }\left(
a_{1}^{\prime},\ldots,a_{n}^{\prime}\right)  }{\pi^{\left(  n\right)  }\left(
a_{\sigma\left(  1\right)  }^{\prime},\ldots,a_{\sigma\left(  n\right)
}^{\prime}\right)  },\label{en}\\
\left[  a_{1},\ldots,a_{n}\right]  ^{\left(  n\right)  }  &  =\varepsilon
_{\sigma\left(  1\right)  ,\ldots,\sigma\left(  n\right)  }^{\left(  n\right)
}\left[  a_{\sigma\left(  1\right)  },\ldots,a_{\sigma\left(  n\right)
}\right]  ^{\left(  n\right)  }, \label{an}%
\end{align}
where $\sigma\in S_{n}$, $\sigma\neq I$, and the factors $\pi^{\left(
n\right)  }$ satisfy (\ref{pa}). The set of $\varepsilon_{\sigma\left(
1\right)  ,\ldots,\sigma\left(  n\right)  }^{\left(  n\right)  }$ is a
\textit{commutation factor }\textquotedblleft\textit{vector}\textquotedblright%
\ $\overrightarrow{\varepsilon}$ of the algebra $\mathcal{A}_{\mathcal{G}%
}^{\left(  n\right)  }$ having $\left(  n!-1\right)  $ components.
\end{definition}

Each component of the \textquotedblleft vector\textquotedblright%
\ $\overrightarrow{\varepsilon}$ is responsible for the commutation of two
$n$-ary monomials in $\mathcal{A}_{\mathcal{G}}^{\left(  n\right)  }$ such
that from (\ref{an}) we have%
\begin{equation}
\left[  a_{\sigma^{\prime}\left(  1\right)  },\ldots,a_{\sigma^{\prime}\left(
n\right)  }\right]  ^{\left(  n\right)  }=\varepsilon_{\sigma\left(  1\right)
,\ldots,\sigma\left(  n\right)  }^{\left(  n\right)  }\left(  a_{\sigma
^{\prime}\left(  1\right)  }^{\prime},\ldots,a_{\sigma^{\prime}\left(
n\right)  }^{\prime}\right)  \left[  a_{\sigma\left(  1\right)  }%
,\ldots,a_{\sigma\left(  n\right)  }\right]  ^{\left(  n\right)  },
\end{equation}
with permutations $\sigma,\sigma^{\prime}\in S_{n}$.

It follows from (\ref{ens}) that an $n$-ary analog of the normalization
property (\ref{ee1}) is%
\begin{align}
\varepsilon_{\sigma\left(  1\right)  ,\ldots,\sigma\left(  n\right)
}^{\left(  n\right)  }\left(  a_{\sigma^{\prime}\left(  1\right)  }^{\prime
},\ldots,a_{\sigma^{\prime}\left(  n\right)  }^{\prime}\right)  \varepsilon
_{\sigma^{\prime}\left(  1\right)  ,\ldots,\sigma^{\prime}\left(  n\right)
}^{\left(  n\right)  }\left(  a_{\sigma\left(  1\right)  }^{\prime}%
,\ldots,a_{\sigma\left(  n\right)  }^{\prime}\right)   &  =1,\\
\varepsilon_{1,\ldots,n}^{\left(  n\right)  }\left(  a^{\prime},\ldots
,a^{\prime}\right)   &  =1,
\end{align}
where $\sigma$,$\sigma^{\prime}\in S_{n}$, $a_{1}^{\prime},\ldots
,a_{n}^{\prime},a^{\prime}\in G$.

If $\varphi\in\operatorname*{Aut}G$ and $\left\{  \pi_{\ast}^{\left(
n\right)  }\right\}  $ is a pullback of $\left\{  \pi^{\left(  n\right)
}\right\}  $ (\ref{ps}), then the corresponding%
\begin{equation}
\varepsilon_{\ast\sigma\left(  1\right)  ,\ldots,\sigma\left(  n\right)
}^{\left(  n\right)  }\left(  a_{1}^{\prime},\ldots,a_{n}^{\prime}\right)
=\frac{\pi_{\ast}^{\left(  n\right)  }\left(  a_{1}^{\prime},\ldots
,a_{n}^{\prime}\right)  }{\pi_{\ast}^{\left(  n\right)  }\left(
a_{\sigma\left(  1\right)  }^{\prime},\ldots,a_{\sigma\left(  n\right)
}^{\prime}\right)  }%
\end{equation}
are commutation factors as well (if $\Bbbk$ is algebraically closed, by
analogy with \cite{sch79}). The same is true for their \textquotedblleft
homotopic\textquotedblright\ analog $\left\{  \pi_{\ast\ast}^{\left(
n\right)  }\right\}  $ (\ref{pss}).

\begin{definition}
A factor set $\left\{  \pi^{\left(  n\right)  }\right\}  $ is called
\textit{totally symmetric}, if $\overrightarrow{\varepsilon}$ is the unit
commutation factor \textquotedblleft vector\textquotedblright, such that all
of its $\left(  n!-1\right)  $ components are identities (in $\Bbbk$)%
\begin{equation}
\varepsilon_{\sigma\left(  1\right)  ,\ldots,\sigma\left(  n\right)
}^{\left(  n\right)  }\left(  a_{1}^{\prime},\ldots,a_{n}^{\prime}\right)
=1,\ \ \sigma\in S_{n},\sigma\neq I,a_{1}^{\prime},\ldots,a_{n}^{\prime}\in G.
\label{tsym}%
\end{equation}

\end{definition}

Suppose we have two factor sets $\left\{  \pi_{1}^{\left(  n\right)
}\right\}  $ and $\left\{  \pi_{2}^{\left(  n\right)  }\right\}  $ which
correspond to the same commutation factor $\varepsilon^{\left(  n\right)  }$
in $\mathcal{A}_{\mathcal{G}}^{\left(  n\right)  }$ such that%
\begin{equation}
\varepsilon_{\sigma\left(  1\right)  ,\ldots,\sigma\left(  n\right)
}^{\left(  n\right)  }\left(  a_{1}^{\prime},\ldots,a_{n}^{\prime}\right)
=\frac{\pi_{1}^{\left(  n\right)  }\left(  a_{1}^{\prime},\ldots,a_{n}%
^{\prime}\right)  }{\pi_{1}^{\left(  n\right)  }\left(  a_{\sigma\left(
1\right)  }^{\prime},\ldots,a_{\sigma\left(  n\right)  }^{\prime}\right)
}=\frac{\pi_{2}^{\left(  n\right)  }\left(  a_{1}^{\prime},\ldots
,a_{n}^{\prime}\right)  }{\pi_{2}^{\left(  n\right)  }\left(  a_{\sigma\left(
1\right)  }^{\prime},\ldots,a_{\sigma\left(  n\right)  }^{\prime}\right)  }.
\label{esn}%
\end{equation}
We can always choose the same order for the commutation factor
\textquotedblleft vector\textquotedblright\ components $\sigma$. Define a new
factor set $\left\{  \pi_{12}^{\left(  n\right)  }\right\}  $ by%
\begin{equation}
\pi_{12}^{\left(  n\right)  }\left(  a_{1}^{\prime},\ldots,a_{n}^{\prime
}\right)  =\frac{\pi_{1}^{\left(  n\right)  }\left(  a_{1}^{\prime}%
,\ldots,a_{n}^{\prime}\right)  }{\pi_{2}^{\left(  n\right)  }\left(
a_{1}^{\prime},\ldots,a_{n}^{\prime}\right)  },\ \ \ a_{1}^{\prime}%
,\ldots,a_{n}^{\prime}\in G. \label{p12}%
\end{equation}

Then $\left\{  \pi_{12}^{\left(  n\right)  }\right\}  $ becomes totally
symmetric, because the corresponding communication factor is%
\begin{equation}
\varepsilon_{12,\sigma\left(  1\right)  ,\ldots,\sigma\left(  n\right)
}^{\left(  n\right)  }\left(  a_{1}^{\prime},\ldots,a_{n}^{\prime}\right)
=\frac{\pi_{12}^{\left(  n\right)  }\left(  a_{1}^{\prime},\ldots
,a_{n}^{\prime}\right)  }{\pi_{12}^{\left(  n\right)  }\left(  a_{\sigma
\left(  1\right)  }^{\prime},\ldots,a_{\sigma\left(  n\right)  }^{\prime
}\right)  }=1,
\end{equation}
as follows from (\ref{esn})--(\ref{p12}). Therefore, as in the binary case, if
the grading group $\mathcal{G}$ is finitely generated and $\Bbbk$ is
algebraically closed, the communication factor $\varepsilon^{\left(  n\right)
}$ is constructed from the unique multiplier $\left\{  \pi^{\left(  n\right)
}\right\}  $ \cite{sch79}.

The are two possible differences from the simple binary case (\ref{esym}),
where for identity commutation factor the symmetry condition (\ref{psym}) is
sufficient: 1) Not all $\varepsilon^{\left(  n\right)  }$ need be equal to the
identity. 2) Some arguments of the Schur-like factors $\pi^{\left(  n\right)
}$ can be intact.

\begin{definition}
An $\varepsilon^{\left(  n\right)  }$-commutative $n$-ary graded algebra
$\mathcal{A}_{\mathcal{G}}^{\left(  n\right)  }$ is called $m$%
\textit{-partially }(or \textit{partially})\textit{ commutative} if exactly
$m$ commutation factors (with permutations $\tilde{\sigma}$) from the $\left(
n!-1\right)  $ total are equal to $1$%
\begin{equation}
\varepsilon_{\tilde{\sigma}\left(  1\right)  ,\ldots,\tilde{\sigma}\left(
n\right)  }^{\left(  n\right)  }\left(  a_{1}^{\prime},\ldots,a_{n}^{\prime
}\right)  =1,\ \ \#\tilde{\sigma}\leq\#\sigma,\sigma\in S_{n},\ \ a_{1}%
^{\prime},\ldots,a_{n}^{\prime}\in G, \label{part}%
\end{equation}
where we denote $\#\tilde{\sigma}=m$, $\#\sigma=n!-1$. If $\#\tilde{\sigma
}=\#\sigma$, then $\mathcal{A}_{\mathcal{G}}^{\left(  n\right)  }$ is totally
commutative, see (\ref{tsym}).
\end{definition}

In an $m$-partially commutative $n$-ary graded algebra $\mathcal{A}%
_{\mathcal{G}}^{\left(  n\right)  }$ the Schur-like factors $\pi^{\left(
n\right)  }$ satisfy $m$ additional symmetry conditions (cf. (\ref{psym}))%
\begin{equation}
\pi^{\left(  n\right)  }\left(  a_{1}^{\prime},\ldots,a_{n}^{\prime}\right)
=\pi^{\left(  n\right)  }\left(  a_{\tilde{\sigma}\left(  1\right)  }^{\prime
},\ldots,a_{\tilde{\sigma}\left(  n\right)  }^{\prime}\right)  ,\ \ \#\tilde
{\sigma}\leq\#\sigma,\ \ \sigma\in S_{n}.
\end{equation}

\begin{example}
[\textsf{Ternary }$\varepsilon$\textsf{-commutative algebra}]Let
$\mathcal{A}_{\mathcal{G}}^{\left(  3\right)  }=\left\langle A\mid\left[
\ \right]  ,+\right\rangle $ be a ternary associative $G$-graded algebra over
$\Bbbk$. The ternary Schur-like factor $\pi_{0}\left(  a_{1}^{\prime}%
,a_{2}^{\prime},a_{3}^{\prime}\right)  $ satisfies the ternary 2-cocycle
conditions (cf. (\ref{px0}))%
\begin{align}
&  \pi^{\left(  3\right)  }\left(  a_{1}^{\prime},a_{2}^{\prime},a_{3}%
^{\prime}\right)  \pi^{\left(  3\right)  }\left(  a_{1}^{\prime}+a_{2}%
^{\prime}+a_{3}^{\prime},a_{4}^{\prime},a_{5}^{\prime}\right) \nonumber\\
&  =\pi^{\left(  3\right)  }\left(  a_{2}^{\prime},a_{3}^{\prime}%
,a_{4}^{\prime}\right)  \pi^{\left(  3\right)  }\left(  a_{1}^{\prime}%
,a_{2}^{\prime}+a_{3}^{\prime}+a_{4}^{\prime},a_{5}^{\prime}\right)
\nonumber\\
&  =\pi^{\left(  3\right)  }\left(  a_{3}^{\prime},a_{4}^{\prime}%
,a_{5}^{\prime}\right)  \pi^{\left(  3\right)  }\left(  a_{1}^{\prime}%
,a_{2}^{\prime},a_{3}^{\prime}+a_{4}^{\prime}+a_{5}^{\prime}\right)
,\ \ a_{1}^{\prime},\ldots,a_{5}^{\prime}\in G.
\end{align}

We can introduce $\left(  3!-1\right)  =5$ different ternary commutation
relations, explicitly%
\begin{align}
&  \left[  a_{1},a_{2},a_{3}\right]  =\varepsilon_{\sigma\left(  1\right)
\sigma\left(  2\right)  \sigma\left(  3\right)  }^{\left(  3\right)  }\left[
a_{\sigma\left(  1\right)  },a_{\sigma\left(  2\right)  },a_{\sigma\left(
3\right)  }\right]  ,\ \ \ a_{1},a_{2},a_{3}\in A,,\ \sigma\in S_{3}%
,\label{a3}\\
\varepsilon_{132}^{\left(  3\right)  }  &  =\frac{\pi^{\left(  3\right)
}\left(  a_{1}^{\prime},a_{2}^{\prime},a_{3}^{\prime}\right)  }{\pi^{\left(
3\right)  }\left(  a_{1}^{\prime},a_{3}^{\prime},a_{2}^{\prime}\right)
},\ \ \varepsilon_{231}^{\left(  3\right)  }=\frac{\pi^{\left(  3\right)
}\left(  a_{1}^{\prime},a_{2}^{\prime},a_{3}^{\prime}\right)  }{\pi^{\left(
3\right)  }\left(  a_{2}^{\prime},a_{3}^{\prime},a_{1}^{\prime}\right)
},\ \ \varepsilon_{213}^{\left(  3\right)  }=\frac{\pi^{\left(  3\right)
}\left(  a_{1}^{\prime},a_{2}^{\prime},a_{3}^{\prime}\right)  }{\pi^{\left(
3\right)  }\left(  a_{2}^{\prime},a_{1}^{\prime},a_{3}^{\prime}\right)
},\nonumber\\
\varepsilon_{312}^{\left(  3\right)  }  &  =\frac{\pi^{\left(  3\right)
}\left(  a_{1}^{\prime},a_{2}^{\prime},a_{3}^{\prime}\right)  }{\pi^{\left(
3\right)  }\left(  a_{3}^{\prime},a_{1}^{\prime},a_{2}^{\prime}\right)
},\ \ \varepsilon_{321}^{\left(  3\right)  }=\frac{\pi^{\left(  3\right)
}\left(  a_{1}^{\prime},a_{2}^{\prime},a_{3}^{\prime}\right)  }{\pi^{\left(
3\right)  }\left(  a_{3}^{\prime},a_{2}^{\prime},a_{1}^{\prime}\right)
},\ \ a_{1}^{\prime},a_{2}^{\prime},a_{3}^{\prime}\in G.
\end{align}

The $1$-partial commutativity in $\mathcal{A}_{\mathcal{G}}^{\left(  3\right)
}$ can be realized, if, for instance,
\begin{equation}
\pi^{\left(  3\right)  }\left(  a_{1}^{\prime},a_{2}^{\prime},a_{3}^{\prime
}\right)  =\pi^{\left(  3\right)  }\left(  a_{1}^{\prime},a_{3}^{\prime}%
,a_{2}^{\prime}\right)  ,\ \ a_{1}^{\prime},a_{2}^{\prime},a_{3}^{\prime}\in
G,
\end{equation}
and in this case $\varepsilon_{132}^{\left(  3\right)  }=1$ so that we obtain
one commutativity relation%
\begin{equation}
\left[  a_{1},a_{2},a_{3}\right]  =\left[  a_{1},a_{3},a_{2}\right]
,\ \ \ a_{1},a_{2},a_{3}\in A,
\end{equation}
while the other $4$ relations in (\ref{a3}) will be $\varepsilon$-commutative.
\end{example}

\subsection{Membership deformed $n$-ary algebras}

Now we consider $n$-ary algebras over obscure sets as their underlying sets,
where each element of them is endowed with the membership function $\mu$ as a
degree of truth\ (see \textsc{Section} \ref{sec-memb}).

\begin{definition}
\label{def-n-balg}An \textit{obscure }$n$-\textit{ary algebra }$\mathcal{A}%
^{\left(  n\right)  }\left(  \mu\right)  =\left\langle \mathfrak{A}^{\left(
\mu\right)  }\mid\left[  \ \right]  ^{\left(  n\right)  },+\right\rangle $ is
an $n$-ary algebra $\mathcal{A}^{\left(  n\right)  }$ over $\Bbbk$ having an
obscure set $\mathfrak{A}^{\left(  \mu\right)  }=\left\{  \left(  a|\mu\left(
a\right)  \right)  ,a\in A,\mu>0\right\}  $ as its underlying set (see
(\ref{am})), and where the membership function $\mu\left(  a\right)  $
satisfies%
\begin{align}
\mu\left(  a_{1}+a_{2}\right)   &  \geq\min\left\{  \mu\left(  a_{1}\right)
,\mu\left(  a_{2}\right)  \right\}  ,\\
\mu\left(  \left[  a_{1},\ldots,a_{n}\right]  _{n}\right)   &  \geq
\min\left\{  \mu\left(  a_{1}\right)  ,\ldots,\mu\left(  a_{n}\right)
\right\}  ,\\
\mu\left(  ka\right)   &  \geq\mu\left(  a\right)  ,\ \ \forall a,a_{i}\in
A,\ \ k\in\Bbbk,\ \ i=1,\ldots,n.
\end{align}

\end{definition}

\begin{definition}
An\textit{ obscure} $G$-\textit{graded }$n$-ary \textit{algebra }%
$\mathcal{A}_{\mathcal{G}}^{\left(  n\right)  }\left(  \mu\right)  $ is a
direct sum decomposition (cf. (\ref{uu}))%
\begin{equation}
\mathcal{A}_{\mathcal{G}}^{\left(  n\right)  }\left(  \mu\right)
=\mathbf{\bigoplus}_{g\in G}\mathcal{A}_{g}^{\left(  n\right)  }\left(
\mu_{g}\right)  ,
\end{equation}
where $\mathfrak{A}^{\left(  \mu\right)  }=%
{\textstyle\bigcup}
{}_{g\in G}\mathfrak{A}_{g}^{\left(  \mu_{g}\right)  }$, $\mathfrak{A}%
^{\left(  \mu_{g}\right)  }=\left\{  \left(  a|\mu_{g}\left(  a\right)
\right)  ,a\in A_{g},\mu_{g}=(0,1]\right\}  $, and the joint membership
function $\mu$ is given by (\ref{ma}).
\end{definition}

In the obscure totally commutative $n$-ary algebra $\mathcal{A}_{\mathcal{G}%
}^{\left(  n\right)  }\left(  \mu\right)  $ (for homogeneous elements) we have
$\left(  n!-1\right)  $ commutativity relations (cf. (\ref{fcom}))%
\begin{equation}
\left[  a_{1},\ldots,a_{n}\right]  ^{\left(  n\right)  }=\left[
a_{\sigma\left(  1\right)  },\ldots,a_{\sigma\left(  n\right)  }\right]
^{\left(  n\right)  },\forall a_{1},\ldots,a_{n}\in\mathfrak{A}^{\left(
\mu\right)  },\ \forall\sigma\in S_{n},\ \sigma\neq I. \label{aa}%
\end{equation}

Let us consider a \textquotedblleft linear\textquotedblright\ (in $\mu$)
deformation of (\ref{aa}) analogous to the binary case (\ref{mab}).

\begin{definition}
An \textit{obscure membership deformed }$n$\textit{-ary algebra} is
$\mathcal{A}_{\ast\mathcal{G}}^{\left(  n\right)  }\left(  \mu\right)
=\left\langle \mathfrak{A}^{\left(  \mu\right)  }\mid\left[  \ \ \right]
_{\ast}^{\left(  n\right)  },+\right\rangle $ in which there are $\left(
n!-1\right)  $ possible noncommutativity relations%
\begin{align}
\mu_{a_{n}^{\prime}}\left(  a_{n}\right)  \left[  a_{1},\ldots,a_{n}\right]
_{\ast}^{\left(  n\right)  } &  =\mu_{a_{\sigma\left(  n\right)  }^{\prime}%
}\left(  a_{\sigma\left(  n\right)  }\right)  \left[  a_{\sigma\left(
1\right)  },\ldots,a_{\sigma\left(  n\right)  }\right]  _{\ast}^{\left(
n\right)  },\label{man}\\
\forall a_{1},\ldots,a_{n} &  \in\mathfrak{A}^{\left(  \mu\right)  }%
,\ \ a_{n}^{\prime},a_{\sigma\left(  n\right)  }^{\prime}\in G,\ \ \forall
\sigma\in S_{n},\ \sigma\neq I.\nonumber
\end{align}

\end{definition}

Since $\mu>0$, we can have

\begin{definition}
The $\left(  n!-1\right)  $ membership commutation factors in $\mathcal{A}%
_{\ast\mathcal{G}}^{\left(  n\right)  }\left(  \mu\right)  $ are defined by%
\begin{equation}
\mathbf{\epsilon}_{\sigma\left(  n\right)  }^{\left(  n\right)  }\left(
a_{n}^{\prime},a_{\sigma\left(  n\right)  }^{\prime},a_{n},a_{\sigma\left(
n\right)  }\right)  =\frac{\mu_{a_{\sigma\left(  n\right)  }^{\prime}}\left(
a_{\sigma\left(  n\right)  }\right)  }{\mu_{a_{n}^{\prime}}\left(
a_{n}\right)  },\ \ a_{n}^{\prime},a_{\sigma\left(  n\right)  }^{\prime}\in
G,\ \ \forall\sigma\in S_{n},\ \sigma\neq I, \label{ena}%
\end{equation}
and the relations (\ref{man}) become
\begin{equation}
\left[  a_{1},\ldots,a_{n}\right]  _{\ast}^{\left(  n\right)  }%
=\mathbf{\epsilon}_{\sigma\left(  n\right)  }^{\left(  n\right)  }\left(
a_{n}^{\prime},a_{\sigma\left(  n\right)  }^{\prime},a_{n},a_{\sigma\left(
n\right)  }\right)  \left[  a_{\sigma\left(  1\right)  },\ldots,a_{\sigma
\left(  n\right)  }\right]  _{\ast}^{\left(  n\right)  }.
\end{equation}

\end{definition}

These definitions are unique, if we require: 1) for connection only two
monomials with different permutations; 2) membership \textquotedblleft
linearity\textquotedblright\ (in $\mu$); 3) compatibility with the binary case
(\ref{abe})--(\ref{emm}).

\begin{example}
For the obscure membership deformed ternary algebra $\mathcal{A}%
_{\ast\mathcal{G}}^{\left(  3\right)  }\left(  \mu\right)  =\left\langle
\mathfrak{A}^{\left(  \mu\right)  }\mid\left[  \ \ \right]  _{\ast}^{\left(
3\right)  },+\right\rangle $, we obtain%
\begin{align}
&  \left[  a_{1},a_{2},a_{3}\right]  =\mathbf{\epsilon}_{\sigma\left(
1\right)  \sigma\left(  2\right)  \sigma\left(  3\right)  }^{\left(  3\right)
}\left[  a_{\sigma\left(  1\right)  },a_{\sigma\left(  2\right)  }%
,a_{\sigma\left(  3\right)  }\right]  _{\ast}^{\left(  3\right)  }%
,\ \ \ a_{1},a_{2},a_{3}\in A,,\ \sigma\in S_{3},\\
\mathbf{\epsilon}_{132}^{\left(  3\right)  }  &  =\mathbf{\epsilon}%
_{312}^{\left(  3\right)  }=\frac{\mu_{a_{3}^{\prime}}\left(  a_{3}\right)
}{\mu_{a_{2}^{\prime}}\left(  a_{2}\right)  },\ \ \mathbf{\epsilon}%
_{231}^{\left(  3\right)  }=\mathbf{\epsilon}_{321}^{\left(  3\right)  }%
=\frac{\mu_{a_{3}^{\prime}}\left(  a_{3}\right)  }{\mu_{a_{1}^{\prime}}\left(
a_{1}\right)  },\ \ \mathbf{\epsilon}_{213}^{\left(  3\right)  }%
=1,,\ \ \ a_{1}^{\prime},a_{2}^{\prime},a_{3}^{\prime}\in G,\nonumber
\end{align}
which means that it is $1$-partially commutative (see (\ref{part})) and has
only $2$ independent membership commutation factors (cf. the binary case
(\ref{emm})).
\end{example}

We now provide a sketch construction of the $n$-ary $\varepsilon$-commutative
algebras (\ref{an}) membership deformation (see \textbf{Subsection}
\ref{subsec-def-e} for the binary case).

\begin{definition}
An \textit{obscure membership deformed }$n$\textit{-ary }$\pi$%
\textit{-commutative algebra} over $\Bbbk$ is $\mathcal{A}_{\star\mathcal{G}%
}^{\left(  n\right)  }\left(  \mu\right)  =\left\langle \mathfrak{A}^{\left(
\mu\right)  }\mid\left[  \ \ \right]  _{\star}^{\left(  n\right)
},+\right\rangle $ in which the following $\left(  n!-1\right)  $
noncommutativity relations are valid%
\begin{align}
&  \pi^{\left(  n\right)  }\left(  a_{\sigma\left(  1\right)  }^{\prime
},\ldots,a_{\sigma\left(  n\right)  }^{\prime}\right)  \mu_{a_{n}^{\prime}%
}\left(  a_{n}\right)  \left[  a_{1},\ldots,a_{n}\right]  _{\star}^{\left(
n\right)  }\nonumber\\
&  =\pi^{\left(  n\right)  }\left(  a_{1}^{\prime},\ldots,a_{n}^{\prime
}\right)  \mu_{a_{\sigma\left(  n\right)  }^{\prime}}\left(  a_{\sigma\left(
n\right)  }\right)  \left[  a_{\sigma\left(  1\right)  },\ldots,a_{\sigma
\left(  n\right)  }\right]  _{\star}^{\left(  n\right)  },\forall a_{i}%
\in\mathfrak{A}^{\left(  \mu\right)  },\ \ a_{i}^{\prime}\in G,\ \ \forall
\sigma\in S_{n},\ \sigma\neq I,
\end{align}
where $\pi^{\left(  n\right)  }$ are $n$-ary Schur-like factors satisfying the
2-cocycle conditions (\ref{pa}).
\end{definition}

\begin{definition}
An algebra $\mathcal{A}_{\star\mathcal{G}}^{\left(  n\right)  }\left(
\mu\right)  $ is called a \textit{double} $\varepsilon_{\pi}^{\left(
n\right)  }/\mathbf{\epsilon}_{\mu}^{\left(  n\right)  }$-\textit{commutative
algebra}, if%
\begin{align}
&  \left[  a_{1},\ldots,a_{n}\right]  _{\star}^{\left(  n\right)  }
=\varepsilon_{\sigma\left(  1\right)  ,\ldots,\sigma\left(  n\right)
}^{\left(  n\right)  }\left(  a_{1}^{\prime},\ldots,a_{n}^{\prime}\right)
\mathbf{\epsilon}_{\sigma\left(  n\right)  }^{\left(  n\right)  }\left(
a_{n}^{\prime},a_{\sigma\left(  n\right)  }^{\prime},a_{n},a_{\sigma\left(
n\right)  }\right)  \left[  a_{\sigma\left(  1\right)  },\ldots,a_{\sigma
\left(  n\right)  }\right]  _{\star}^{\left(  n\right)  },\\
&  \varepsilon_{\sigma\left(  1\right)  ,\ldots,\sigma\left(  n\right)
}^{\left(  n\right)  }\left(  a_{1}^{\prime},\ldots,a_{n}^{\prime}\right)
=\frac{\pi^{\left(  n\right)  }\left(  a_{1}^{\prime},\ldots,a_{n}^{\prime
}\right)  }{\pi^{\left(  n\right)  }\left(  a_{\sigma\left(  1\right)
}^{\prime},\ldots,a_{\sigma\left(  n\right)  }^{\prime}\right)  },\\
&  \mathbf{\epsilon}_{\sigma\left(  n\right)  }^{\left(  n\right)  }\left(
a_{n}^{\prime},a_{\sigma\left(  n\right)  }^{\prime},a_{n},a_{\sigma\left(
n\right)  }\right)  =\frac{\mu_{a_{\sigma\left(  n\right)  }^{\prime}}\left(
a_{\sigma\left(  n\right)  }\right)  }{\mu_{a_{n}^{\prime}}\left(
a_{n}\right)  },\\
&  \ a_{1},\ldots,a_{n},a_{\sigma\left(  1\right)  },\ldots,a_{\sigma\left(
n\right)  } \in\mathfrak{A}^{\left(  \mu\right)  },\ \ a_{1}^{\prime}%
,\ldots,a_{n}^{\prime},a_{\sigma\left(  1\right)  }^{\prime},\ldots
,a_{\sigma\left(  n\right)  }^{\prime}\in G,\ \ \forall\sigma\in
S_{n},\ \sigma\neq I,
\end{align}
where $\varepsilon_{\pi}^{\left(  n\right)  }$ is the $n$-ary grading
commutation factor (\ref{en}) and $\mathbf{\epsilon}_{\mu}^{\left(  n\right)
}$ is the $n$-ary membership commutation factor (in our definition (\ref{ena})).
\end{definition}

This procedure can be considered to be the membership deformation of the given
$\varepsilon$-commutative algebra (\ref{an}), which corresponds to the first
version of the binary commutative algebra deformation as in (\ref{mp2}) and
leads to a nonassociative algebra, in general. To achieve associativity one
should consider the second version of the algebra deformation as in the binary
case (\ref{ab1}): to introduce a different $n$-ary multiplication $\left[
\ \ \right]  _{\circledast}^{\left(  n\right)  }$ such that the product of
commutation factors $\varepsilon^{\left(  n\right)  }\mathbf{\epsilon
}^{\left(  n\right)  }$ satisfies the $n$-ary $2$-cocycle-like\ conditions,
while the $n$-ary noncocycle commutation factor satisfies the
\textquotedblleft membership deformed cocycle-like\textquotedblright%
\ conditions, analogous to (\ref{em1})--(\ref{em3}).

\bigskip

\textbf{Acknowledgements.} \addcontentsline{toc}{section}{Acknowledgements}
The author wishes to express his sincere thankfulness and deep gratitude to
Andrew James Bruce, Grigorij Kurinnoj, Mike Hewitt, Thomas Nordahl, Vladimir
Tkach, Raimund Vogl, Alexander Voronov, and Wend Werner for numerous fruitful
discussions and valuable support.

\newpage
\pagestyle{emptyf}
\mbox{}
\vskip 0.5cm

\end{document}